\newtheorem{thm}{Theorem}[section]
\newtheorem{lem}{Lemma}[section]
\newtheorem{pro}{Proposition}[section]
\newtheorem{cor}{Corollary}[section]
\newtheorem{defi}{Definition}[section]
\newtheorem{exam}{Example}[section]
\newcommand{\bi}{{\bf i}}
\newcommand{\be}{{\bf e}}
\newcommand{\bc}{{\mathbb C}}
\newcommand{\bh}{{\mathbb H}}
\newcommand{\br}{{\mathbb R}}
\begin{document}

\title{The Moore-Penrose Inverses of Clifford Algebra $C\ell_{1,2}$}
\author{Wensheng Cao,  Ronglan Zheng, Huihui Cao}
\date{}
\maketitle

\bigskip
{\bf Abstract.} \,\,  In this paper, we introduce a ring  isomorphism between the  Clifford algebra $C\ell_{1,2}$ and a ring of matrices. By  such a ring isomorphism, we  introduce the concept of the Moore-Penrose inverse in  Clifford algebra $C\ell_{1,2}$.  Using the Moore-Penrose inverse, we  solve the linear equation $axb=d$ in $C\ell_{1,2}$.  We also obtain necessary and sufficient conditions for two numbers in $C\ell_{1,2}$  to be similar.

\vspace{1mm}\baselineskip 12pt

{\bf Mathematics Subject Classification.}\ \ Primary 15A24; Secondary  15A33.

{\bf Keywords.} \ \  Ring  isomorphism, Moore-Penrose inverse, linear equation in $C\ell_{1,2}$, similar

\section{Introduction}

\qquad Clifford's geometric algebras were firstly created by William K. Clifford in 1878. Such algebras were independently rediscovered by Lipschitz in 1880 \cite{lou}. Clifford algebras turned out to play an important role in applications to theoretical physics and some related fields \cite{lib11,lou}.  In this paper we adopt the following definition.
\begin{defi} The Clifford algebra  $C\ell_{p,q}$ with $p+q=n$  is generated by the orthonormal basis $\{i_1,\cdots,i_n\}$ of $\br^{p,q}$ with  the multiplication rules \cite{lou}
\begin{equation}\label{mulrul}i_t^2=1,\,1\le t\le p,\,\,\,\,\,i_t^2=-1,\,p<t\le n,\,\,\,\,\,i_ti_m=-i_mi_t,\,t<m.\end{equation}
\end{defi}
The Clifford algebra  $C\ell_{p,q}$  can be thought of as a  $2^n$-dimensional real linear space.  Let $\br$, $\bc$, $\bh$ and $\bh_s$  be  respectively the real numbers, the complex numbers, the quaternions and the split quaternions.  Then we have $\bc\cong C\ell_{0,1}$, $\bh\cong C\ell_{0,2}$ and $\bh_s\cong C\ell_{1,1}$.

 In this paper we focus on the Clifford algebra $C\ell_{1,2}$.   The basis of $C\ell_{1,2}$ over $\br$ is
\begin{equation}\label{basise}
\be_0=1,\,\be_1=i_1,\,\be_2=i_2,\,\be_3=i_1i_2,\,\be_4=i_3,\,\be_5=i_1i_3,\,\be_6=i_2i_3,\,\be_7=i_1i_2i_3.
\end{equation}
	According to the multiplication rules (\ref{mulrul}), we have the following multiplication rules  for the basis of $C\ell_{1,2}$.
\begin{table}[h]\label{tab1}
	\centering
	\caption{Multiplication table for the Clifford algebra $C\ell_{1,2}$}
	\begin{tabular}{c|rrrrrrr}
		& $\be_1$ &  $\be_2$  &  $\be_3$& $\be_4$ &  $\be_5$ &  $\be_6$& $\be_7$ \\
		\hline
		$\be_1$ &1 & $\be_3$ & $\be_2$ & $\be_5$ & $\be_4$ & $\be_7$ & $\be_6$ \\
		$\be_2$& -$\be_3$ &-1& $\be_1$ & $\be_6$ & -$\be_7$ & -$\be_4$ & $\be_5$ \\
		$\be_3$& -$\be_2$ & -$\be_1$ & 1 & $\be_7$ & -$\be_6$ & -$\be_5$ & $\be_4$ \\
		$\be_4$& -$\be_5$ & -$\be_6$ & $\be_7$ & -1 & $\be_1$ & $\be_2$ & -$\be_3$ \\
		$\be_5$& -$\be_4$ & -$\be_7$ & $\be_6$ & -$\be_1$ &1 & $\be_3$ & -$\be_2$ \\
		$\be_6$& $\be_7$ & $\be_4$ & $\be_5$ & -$\be_2$ & -$\be_3$ & -1 & -$\be_1$ \\
		$\be_7$& $\be_6$ & $\be_5$ & $\be_4$ & -$\be_3$ & -$\be_2$ & -$\be_1$ & -1
	\end{tabular}
\end{table}

Let \begin{equation}\label{cent}Cent(C\ell_{1,2})=\{a\in C\ell_{1,2}:xa=ax,\forall x\in C\ell_{1,2}\}.\end{equation} Obviously, we have the following proposition.
\begin{pro}\label{subalgebra}
	\begin{itemize}
		\item[(1)] 	$C\ell_{1,1}=span\{\be_0,\be_1,\be_2,\be_3\}$ and
		  \begin{equation} C\ell_{1,2}=C\ell_{1,1}+C\ell_{1,1}\be_4\end{equation}
		 and therefore each $a\in C\ell_{1,2}$ can be represented by
		\begin{equation}
			a=a_h+a_H\be_4,
		\end{equation}
		where $a_h=a_0+a_1\be_1+a_2\be_2+a_3\be_3,a_H=a_4+a_5\be_1+a_6\be_2+a_7\be_3\in C\ell_{1,1}.$
		\item[(2)] The center of $C\ell_{1,2}$ is  	\begin{equation}Cent(C\ell_{1,2})=\br+\br \be_7.	\end{equation}
	\end{itemize}
		\end{pro}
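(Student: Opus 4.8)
The plan is to treat the two parts separately, each by a short direct computation with Table 1.

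For part (1), I would first note that the linear span $S=\text{span}\{\be_0,\be_1,\be_2,\be_3\}$ is closed under multiplication: the entries of Table 1 lying in the rows and columns indexed by $\be_1,\be_2,\be_3$ all belong to $S$. Hence $S$ is a $4$-dimensional subalgebra, and since $\be_1=i_1,\be_2=i_2,\be_3=i_1i_2$ with $i_1^2=1$, $i_2^2=-1$, $i_1i_2=-i_2i_1$, the defining rule (\ref{mulrul}) identifies $S$ with $C\ell_{1,1}$. Next, reading the $\be_4$-column of Table 1 (together with $\be_0\be_4=\be_4$) gives $\be_4=\be_0\be_4$, $\be_5=\be_1\be_4$, $\be_6=\be_2\be_4$, $\be_7=\be_3\be_4$, so $\text{span}\{\be_4,\be_5,\be_6,\be_7\}=C\ell_{1,1}\be_4$. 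Because $\{\be_0,\dots,\be_7\}$ is an $\br$-basis of $C\ell_{1,2}$, the two $4$-dimensional subspaces $C\ell_{1,1}$ and $C\ell_{1,1}\be_4$ meet only in $0$ and together span everything, giving $C\ell_{1,2}=C\ell_{1,1}\oplus C\ell_{1,1}\be_4$ and with it the claimed unique representation $a=a_h+a_H\be_4$.

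For part (2), the inclusion $\br+\br\be_7\subseteq Cent(C\ell_{1,2})$ follows by checking from Table 1 that $\be_7$ commutes with the three generators $\be_1,\be_2,\be_4$ (the row-$\be_7$, column-$\be_j$ and row-$\be_j$, column-$\be_7$ entries agree for $j=1,2,4$); since every element of $C\ell_{1,2}$ is an $\br$-combination of products of $\be_1,\be_2,\be_4$, this forces $\be_7$ to be central, and scalars are central trivially. For the opposite inclusion I would take a general $a=\sum_{k=0}^{7}a_k\be_k$ and impose $a\be_1=\be_1 a$ and $a\be_2=\be_2 a$. Expanding each side with Table 1 and matching coefficients, the relation with $\be_1$ kills $a_2,a_3,a_4,a_5$, and the relation with $\be_2$ then kills $a_1,a_6$; what remains is $a=a_0+a_7\be_7$ (commutation with $\be_4$ need not even be invoked).

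Everything here is linear bookkeeping, so I do not anticipate a genuine obstacle; the single point requiring care is the noncommutativity — one must read $\be_i\be_j$ from the row-$\be_i$, column-$\be_j$ cell and $\be_j\be_i$ from the row-$\be_j$, column-$\be_i$ cell, and keep the two apart throughout the coefficient comparison. As an alternative to the explicit computation in part (2) one could quote the general fact that $Cent(C\ell_{p,q})$ equals $\br$ when $p+q$ is even and $\br\oplus\br\,i_1\cdots i_n$ when $p+q$ is odd, but with $n=3$ the hands-on check is short enough that keeping the proof self-contained is preferable.
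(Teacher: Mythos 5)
Your verification is correct, and it is precisely the routine table check the paper has in mind: the paper states Proposition \ref{subalgebra} without proof (``Obviously, \dots''), relying on the multiplication rules in Table 1, which is exactly what you carry out. Both your closure/product computation for part (1) and your coefficient comparison forcing $a=a_0+a_7\be_7$ in part (2) check out against the table, so nothing is missing.
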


\begin{defi}
For $a=a_0+a_1\be_1+a_2\be_2+a_3\be_3+a_4\be_4+a_5\be_5+a_6\be_6+a_7\be_7\in
C\ell_{1,2}$ where $a_i\in \br$, we define the following associated notations of $a$:

the conjugate of $a$:\ \  $
\bar{a}=a_0-a_1\be_1-a_2\be_2-a_3\be_3-a_4\be_4-a_5\be_5-a_6\be_6+a_7\be_7;$

the prime of $a$:\ \ $a'=a_0+a_1\be_1-a_2\be_2+a_3\be_3-a_4\be_4+a_5\be_5-a_6\be_6-a_7\be_7;$

the real  part of $a$ :\ \ $Cre(a)=\frac{1}{2}(a+\bar{a})=a_0+a_7e_7;$

the imaginary part of $a$ :\ \
$Cim(a)=a-Cre(a);$

 $N(a)=a_0^2-a_1^2+a_2^2-a_3^2+a_4^2-a_5^2+a_6^2-a_7^2;$

 $T(a)=a_0a_7+a_2a_5-a_1a_6-a_3a_4;$

$P(a)=N(a)^2+4T(a)^2.$

\end{defi}

We can verify the following proposition by applying the
multiplication rules in Table \ref{tab1}.

\begin{pro}\label{properties} Let $a,b\in C\ell_{1,2}$. Then
\begin{itemize}
	\item[(1)]$T(a)=T(\bar{a})=-T(a'),\,\,\, N(a)=N(a')=N(\bar{a});$
	\item[(2)] $\overline{ab}=\bar{b}\bar{a},\,\,\,(ab)'=b'a';$
	\item[(3)] $a\bar{a}=\bar{a}a=N(a)+2T(a)\be_7;$
	\item[(4)] $T(ab)=N(a)T(b)+N(b)T(a);$
	\item[(5)] $N(ab)=N(a)N(b)-4T(a)T(b);$
	\item[(6)] $a\bar{a}(N(a)-2T(a)\be_7)=a(N(a)-2T(a)\be_7)\bar{a}=P(a);$
	\item[(7)] $P(ab)=P(a)P(b);$
	\item[(8)] $Cre(ab)=Cre(ba).$
	\end{itemize}
\end{pro}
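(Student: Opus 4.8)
The plan is to prove the eight identities in the order listed, arranging matters so that only (1), (2), (3) and (8) require computation with Table~\ref{tab1} and the other four drop out formally. I would begin with (1), which is pure sign-bookkeeping: conjugation fixes the $\be_0$- and $\be_7$-coordinates and negates those of $\be_1,\dots,\be_6$, while the prime fixes the coordinates of $\be_0,\be_1,\be_3,\be_5$ and negates those of $\be_2,\be_4,\be_6,\be_7$. Substituting these sign changes into $N$ leaves every monomial $a_i^2$ fixed, so $N(a)=N(\bar a)=N(a')$; substituting into $T(a)=a_0a_7+a_2a_5-a_1a_6-a_3a_4$ leaves each of the four products fixed under conjugation and reverses the sign of each under the prime, giving $T(\bar a)=T(a)$ and $T(a')=-T(a)$.

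For (2) I would identify the bar with the Clifford conjugation of $C\ell_{1,2}$ (grade involution composed with reversion) and the prime with the algebra anti-automorphism induced by the orthogonal map $i_1\mapsto i_1$, $i_2\mapsto -i_2$, $i_3\mapsto -i_3$ of $\br^{1,2}$; a short check shows these two prescriptions reproduce exactly the sign patterns defining $\bar a$ and $a'$, and since both are algebra anti-automorphisms one gets $\overline{ab}=\bar b\bar a$ and $(ab)'=b'a'$ at once. (In the computational spirit of the paper one may instead verify the two identities on all pairs $\be_i,\be_j$ from Table~\ref{tab1} and extend bilinearly.) Item (3) is the one genuinely unavoidable computation: expand $a\bar a=\sum_{i,j}a_i\bar a_j\,\be_i\be_j$ and sort the products $\be_i\be_j$ by the basis vector they produce; the $\be_0$-part collects to $N(a)$, the $\be_7$-part to $2T(a)$, and every other component cancels in pairs, in agreement with $a\bar a$ lying in the center $\br+\br\be_7$ of Proposition~\ref{subalgebra}. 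The equality $\bar a a=a\bar a$ then follows by applying this to $\bar a$ in place of $a$ and invoking (1).

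The rest is formal. For (4) and (5): using (2) and the centrality of $N(b)+2T(b)\be_7$, one has $(ab)\overline{ab}=a(b\bar b)\bar a=(N(b)+2T(b)\be_7)(a\bar a)$; now substitute $a\bar a=N(a)+2T(a)\be_7$, use $\be_7^2=-1$, and match the scalar and $\be_7$-parts against $(ab)\overline{ab}=N(ab)+2T(ab)\be_7$ to read off both formulas simultaneously. (Conceptually: $a\mapsto a\bar a$ is a multiplicative map of $C\ell_{1,2}$ into its center $\br+\br\be_7\cong\bc$, and (4)--(5) merely record the real and $\be_7$-parts of $(ab)\overline{ab}=(a\bar a)(b\bar b)$.) Identity (6) is immediate from (3): $(N(a)+2T(a)\be_7)(N(a)-2T(a)\be_7)=N(a)^2+4T(a)^2=P(a)$, and $a(N(a)-2T(a)\be_7)\bar a=(N(a)-2T(a)\be_7)\,a\bar a$ by centrality of $N(a)-2T(a)\be_7$. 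Then (7) is the polynomial identity $(xy-4st)^2+4(xt+ys)^2=(x^2+4s^2)(y^2+4t^2)$ applied to $x=N(a),y=N(b),s=T(a),t=T(b)$ via (4)--(5); equivalently it is multiplicativity of the complex modulus under $a\mapsto N(a)+2T(a)\be_7$.

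Finally, for (8) I would compute from Table~\ref{tab1} the $\be_0$- and $\be_7$-coordinates of a general product $xy$: the scalar coordinate comes out as $\sum_i\eta_i x_iy_i$ with $\eta_i=\pm1$, and the $\be_7$-coordinate as a sum of terms $\pm(x_iy_j+x_jy_i)$, both manifestly symmetric in $x$ and $y$; since $Cre(xy)$ equals the scalar coordinate plus $\be_7$ times the $\be_7$-coordinate, $Cre(xy)=Cre(yx)$, i.e. $Cre(ab)=Cre(ba)$ (equivalently, the commutator $ab-ba$ has vanishing $\be_0$- and $\be_7$-components). The only real labor in the whole proposition is the two explicit Table~\ref{tab1} computations behind (3) and (8), and the mild obstacle there is purely bookkeeping — organizing the products $\be_i\be_j$ by their output — which becomes light once one anticipates that $a\bar a$ and $Cre(xy)$ must be central, so that most terms cancel with no arithmetic; everything in (4)--(7) is then downstream of (2) and (3).
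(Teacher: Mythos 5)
Your proposal is correct, and it is organized rather differently from the paper: the paper offers no written argument at all beyond the remark that the proposition ``can be verified by applying the multiplication rules in Table 1,'' i.e.\ a blanket coordinate-wise verification of all eight identities. You instead confine the table work to (1), (3), (8) (and optionally (2)), identify the bar with Clifford conjugation and the prime with the anti-automorphism extending $i_1\mapsto i_1$, $i_2\mapsto -i_2$, $i_3\mapsto -i_3$ (your sign check does confirm these identifications, though strictly the orthogonal map alone induces an automorphism and the anti-automorphism is its composition with reversion -- your fallback verification on basis pairs covers this), and then get (4)--(7) formally from (2), (3), the centrality of $\br+\br\be_7$ and $\be_7^2=-1$: matching the $1$- and $\be_7$-components of $(ab)\overline{ab}=(a\bar a)(b\bar b)$ yields (4)--(5), centrality gives (6), and (7) is the modulus identity $(xy-4st)^2+4(xt+ys)^2=(x^2+4s^2)(y^2+4t^2)$, which checks out. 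Your derivation of $\bar aa=a\bar a$ from the expansion applied to $\bar a$ together with (1) is also fine. What your route buys is a substantial reduction in raw computation and a conceptual explanation of (4)--(7) as the statement that $a\mapsto a\bar a$ lands in the center $\br+\br\be_7\cong\bc$ multiplicatively; what the paper's route buys is uniformity (every item is the same kind of mechanical check) at the cost of eight separate coordinate computations.
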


Algebra isomorphisms are useful tools in studying the properties of Clifford algebra. Using
the real matrix representation, Cao \cite{cao} obtained  the Moore-Penrose inverse of $C\ell_{0,3}$ and  studied the  similarity and consimilarity in  $C\ell_{0,3}$.
  Ablamowicz \cite{abla},  Cao and Chang  \cite{caochang} used different algebra isomorphisms  to find  the Moore-Penrose inverse of split quaternions, which can be thought of as $C\ell_{1,1}$.

In this paper, we focus on the concepts of the Moore-Penrose inverse  and similarity  in  Clifford algebra $C\ell_{1,2}$.

The paper is organized as follows. In Section \ref{matrixrep}, we will introduce a ring isomorphism between the  Clifford algebra $C\ell_{1,2}$ and a ring of matrices.  By such a ring isomorphism, we will represent $a\in C\ell_{1,2}$ by a matrix $L(a)$.  We obtain the determinant and the eigenvalues of $L(a)$.   In Section \ref{mpinvsec}, by  such a ring isomorphism, we  introduce the concept of the Moore-Penrose inverse in  Clifford algebra $C\ell_{1,2}$.  Section \ref{eqsec} aims to solve the linear equation $axb=d$ in $C\ell_{1,2}$. In Section \ref{simsec},  we will obtain some necessary and sufficient conditions for two numbers in $C\ell_{1,2}$ to be similar.

\section{Ring isomorphism}\label{matrixrep}

\qquad Let $A^T$ be  the transpose of matrix $A$. Denote $\overrightarrow{x}=(x_0,x_1,x_2,x_3,x_4,x_5,x_6,x_7)^T\in \br^8$ for
$x=x_0+x_1\be_1+x_2\be_2+x_3\be_3+x_4\be_4+x_5\be_5+x_6\be_6+x_7\be_7\in C\ell_{1,2}$. Each $a\in  C\ell_{1,2}$ define two maps from  $C\ell_{1,2}$ to $C\ell_{1,2}$ by
\begin{equation}\label{isomorph1}
	L_a:x\to ax
\end{equation}
and
\begin{equation}\label{isomorph2}
	R_a:x\to xa.
\end{equation}
Based on  multiplication rules in  Table \ref{tab1}, multiplication can be represented by an ordinary matrix-by-vector product. Such a method has been used to study some properties of quaternions \cite{gros}.  In such a way, we can verify the  following two propositions.
\begin{pro} \label{promat}
For $a,x\in C\ell_{1,2}$, we  have
%\begin{eqnarray*}\label{isomorph2}
%	L:C\ell_{1,2}\to Mat(8,\br)\\
%	a\to L(a)
%\end{eqnarray*}
\begin{equation}
	\overrightarrow{ax}=L(a)\overrightarrow{x}
\end{equation}
and
\begin{equation}
	\overrightarrow{xa}=R(a)\overrightarrow{x},
\end{equation}
where
\begin{equation} \label{la}
	L(a)=\left(
	\begin{array}{cccccccc}
		a_0 & a_1  & -a_2  & a_3 & -a_4 & a_5  & -a_6  & -a_7  \\
		a_1 & a_0   & -a_3  & a_2  & -a_5 & a_4   & -a_7  & -a_6  \\
		a_2 & -a_3  & a_0   & a_1  & -a_6 & -a_7  & a_4   & -a_5 \\
		a_3 &  -a_2 & a_1   & a_0 & -a_7 & -a_6 & a_5   & -a_4 \\
		a_4 & -a_5  & a_6  & a_7 & a_0 & a_1  & -a_2  & a_3  \\
		a_5 & -a_4   & a_7  & a_6  & a_1 &a_0   & -a_3  & a_2  \\
		a_6 & a_7  & -a_4   &a_5   & a_2 & -a_3  & a_0   & a_1 \\
		a_7 & a_6 & -a_5   & a_4 &a_3 &  -a_2 & a_1   & a_0
	\end{array}
	\right)
\end{equation}
and
\begin{equation} \label{ra}
	R(a)=K_8 L(a)^TK_8, \end{equation}
where $K_8=diag(1,1,-1,1,-1,1,-1,-1)$.
\end{pro}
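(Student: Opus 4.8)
The plan is to verify Proposition \ref{promat} by a direct, bookkeeping computation, organized so that the bulk of the casework is replaced by structural observations. The key identity to be established is $\overrightarrow{ax}=L(a)\overrightarrow{x}$ for all $a,x\in C\ell_{1,2}$; the companion identity $\overrightarrow{xa}=R(a)\overrightarrow{x}$ and the relation $R(a)=K_8L(a)^TK_8$ will then follow by a short symmetry argument. First I would note that, for fixed $a$, the map $L_a\colon x\mapsto ax$ is $\mathbb{R}$-linear, hence is represented by \emph{some} $8\times 8$ real matrix acting on $\overrightarrow{x}$; its $j$-th column is precisely the coordinate vector $\overrightarrow{a\be_{j-1}}$ (indexing rows/columns by $0,\dots,7$). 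So the claim reduces to checking that, for each basis element $\be_j$, the product $a\be_j$ expanded via Proposition \ref{subalgebra} and Table \ref{tab1} has coordinate vector equal to the $j$-th column of the displayed matrix $L(a)$. This is eight linear-combination checks, each a single column of Table \ref{tab1} read off with the appropriate signs; for instance the $\be_0$ column is $(a_0,a_1,a_2,a_3,a_4,a_5,a_6,a_7)^T$ trivially, and the $\be_1$ column is obtained from the first row of Table \ref{tab1} together with $\be_1\be_1=1$.

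To keep this honest without grinding through all $64$ entries, I would exploit the block structure coming from $C\ell_{1,2}=C\ell_{1,1}+C\ell_{1,1}\be_4$. Writing $a=a_h+a_H\be_4$ and $x=x_h+x_H\be_4$ with $a_h,a_H,x_h,x_H\in C\ell_{1,1}$, and using $\be_4^2=-1$ together with the commutation relations $\be_4\be_1=-\be_1\be_4$, $\be_4\be_2=-\be_2\be_4$, $\be_4\be_3=\be_3\be_4$ from Table \ref{tab1}, one gets
\begin{equation}
ax=(a_hx_h - a_H\,x_H^{\dagger}) + (a_Hx_h^{\dagger} + a_hx_H)\be_4,
\end{equation}
where $y^{\dagger}$ denotes the automorphism of $C\ell_{1,1}$ induced by conjugation with $\be_4$ (it fixes $\be_0,\be_3$ and negates $\be_1,\be_2$). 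This displays $L(a)$ as a $2\times2$ block matrix whose $4\times4$ blocks are left/right multiplications in $C\ell_{1,1}$ composed with the sign-diagonal $\mathrm{diag}(1,-1,-1,1)$; comparing with (\ref{la}) one checks the four blocks match, which reduces the verification to the already-understood $C\ell_{1,1}$ multiplication table (the top-left $4\times4$ corner of Table \ref{tab1}). This is the step I expect to be the main obstacle: getting all the signs in the $\dagger$-twist and in the block placement exactly consistent with the stated $L(a)$, since a single misassigned sign in the commutation relations propagates into a whole $4\times4$ block.

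For the second identity, the cleanest route is to relate right multiplication to left multiplication via the conjugation anti-automorphism. From Proposition \ref{properties}(2), $\overline{ax}=\bar x\,\bar a$, i.e. $R_{\bar a}$ acting on $\bar x$ equals the conjugate of $L_a$ acting on $x$. In coordinates, conjugation $x\mapsto\bar x$ is the diagonal matrix $\mathrm{diag}(1,-1,-1,-1,-1,-1,-1,1)$; call it $J$. Then $\overrightarrow{xa}=\overrightarrow{\overline{\bar a\,\bar x}}=J\,\overrightarrow{\bar a\,\bar x}=J\,L(\bar a)\,J\,\overrightarrow{x}$, so $R(a)=J\,L(\bar a)\,J$. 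A direct comparison of $J\,L(\bar a)\,J$ with $K_8L(a)^TK_8$ — using that $L(\bar a)$ is obtained from $L(a)$ by the substitution $a_i\mapsto -a_i$ for $1\le i\le6$ (leaving $a_0,a_7$ fixed) — reduces the last claim to a finite sign check on the eight $8\times8$ matrices, or more efficiently to checking the four block relations again. One then confirms $K_8=\mathrm{diag}(1,1,-1,1,-1,1,-1,-1)$ is exactly the diagonal matrix making $K_8L(a)^TK_8=J L(\bar a)J$ hold; this is a routine but necessary verification, and completes the proof.
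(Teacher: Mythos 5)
Your proposal is correct, and at bottom it does what the paper does: the paper gives no written proof at all, simply asserting that the identities follow from Table \ref{tab1} by writing multiplication as a matrix-by-vector product, which is exactly your column-by-column check that the $j$-th column of $L(a)$ is $\overrightarrow{a\be_j}$. What you add is useful structure. The splitting $a=a_h+a_H\be_4$, $x=x_h+x_H\be_4$ with the twist $\be_4 y=y^{\dagger}\be_4$ does give $ax=(a_hx_h-a_Hx_H^{\dagger})+(a_hx_H+a_Hx_h^{\dagger})\be_4$, and the four $4\times4$ blocks of the displayed $L(a)$ are indeed $L_{C\ell_{1,1}}(a_h)$, $-L_{C\ell_{1,1}}(a_H)D$, $L_{C\ell_{1,1}}(a_H)D$, $L_{C\ell_{1,1}}(a_h)$ with $D=\mathrm{diag}(1,-1,-1,1)$; note, though, that all four blocks are \emph{left} multiplications in $C\ell_{1,1}$ twisted by $\dagger$, so your phrase ``left/right multiplications'' is a slight misstatement. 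Your route to $R(a)$ via $\overrightarrow{xa}=J\,L(\bar a)\,J\,\overrightarrow{x}$ with $J=\mathrm{diag}(1,-1,-1,-1,-1,-1,-1,1)$ is a genuine shortcut the paper does not take: combined with $L(\bar a)=S_8L(a)^TS_8$ (the paper's next proposition) and the elementary identity $JS_8=S_8J=K_8$, it yields $R(a)=K_8L(a)^TK_8$ at once, so the ``routine sign check'' you defer is precisely that $S_8$-identity rather than a fresh $64$-entry computation. Two small slips, neither fatal: the $\be_1$ column of $L(a)$ is read from the $\be_1$ \emph{column} of Table \ref{tab1} (the products $\be_i\be_1$), not from its first row; and the finite verifications you leave as routine (the $C\ell_{1,1}$ block table, or equivalently the $S_8$ relation) do still need to be carried out explicitly, exactly as in the paper's own implicit verification.
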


\begin{pro}Let $S_8=diag(1,-1,1,-1,1,-1,1,-1)$. Then we have
\begin{equation}L(a')=L(a)^T;
\end{equation}
\begin{equation} R(a')=R(a)^T;
\end{equation}
\begin{equation} \label{bara} L(\bar{a})=S_8L(a)^TS_8; \end{equation}
\begin{equation} \label{Rbara} R(\bar{a})=S_8R(a)^TS_8. \end{equation}
\end{pro}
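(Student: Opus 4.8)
The plan is to deduce all four identities from the anti-automorphism relations $(ab)'=b'a'$ and $\overline{ab}=\bar b\bar a$ of Proposition \ref{properties}(2), combined with the formula $R(a)=K_8L(a)^TK_8$ from (\ref{ra}), rather than comparing $8\times 8$ matrices entry by entry. The first step is to observe that the diagonal matrices in play are exactly the coordinate matrices of the relevant involutions: directly from the definition of $a'$ one reads off $\overrightarrow{a'}=K_8\overrightarrow{a}$, and from the definition of $\bar a$ one gets $\overrightarrow{\bar a}=C_8\overrightarrow{a}$ where $C_8=\mathrm{diag}(1,-1,-1,-1,-1,-1,-1,1)$. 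All of $K_8,C_8,S_8$ are symmetric involutions, and a one-line diagonal computation records the two facts I will use repeatedly: $K_8C_8=C_8K_8=S_8$ and $K_8S_8=S_8K_8=C_8$.

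Next I would turn the anti-automorphism identities into matrix identities. Since $R(c)$ is the matrix of right multiplication by $c$, writing $(ax)'=x'a'$ in coordinates gives $K_8L(a)\overrightarrow{x}=R(a')K_8\overrightarrow{x}$ for all $x$, hence $R(a')=K_8L(a)K_8$; in the same way $(xa)'=a'x'$ gives $L(a')=K_8R(a)K_8$, while $\overline{ax}=\bar x\bar a$ and $\overline{xa}=\bar a\bar x$ give $R(\bar a)=C_8L(a)C_8$ and $L(\bar a)=C_8R(a)C_8$. Now I substitute $R(a)=K_8L(a)^TK_8$: using $K_8^2=I$ we get $L(a')=K_8R(a)K_8=L(a)^T$; transposing $R(a)=K_8L(a)^TK_8$ (and using that $K_8$ is symmetric) gives $R(a)^T=K_8L(a)K_8=R(a')$; then $L(\bar a)=C_8K_8L(a)^TK_8C_8=S_8L(a)^TS_8$ by $C_8K_8=S_8$; and finally $R(\bar a)=C_8L(a)C_8=(S_8K_8)L(a)(K_8S_8)=S_8\bigl(K_8L(a)K_8\bigr)S_8=S_8R(a)^TS_8$, using $S_8K_8=C_8$ and once more the transpose of (\ref{ra}).

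I expect the only delicate point to be the sign bookkeeping: one must be sure that $K_8$ as given in (\ref{ra}) really is the coordinate matrix of the prime (it is, by inspection of the definition of $a'$) and that the products $K_8C_8=S_8$, $S_8K_8=C_8$ are correct; once these are in place the rest is purely formal. As a fallback, note that every identity in the statement is $\br$-linear in $a$, so it would also suffice to verify all four for $a\in\{\be_0,\be_1,\dots,\be_7\}$; for each basis element $L(\be_k)$ and $R(\be_k)$ are signed permutation matrices read directly from Table \ref{tab1}, and the checks become elementary (though more tedious) sign comparisons.
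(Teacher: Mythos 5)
Your proof is correct, and it takes a genuinely different route from the paper's: the paper gives no written argument for this proposition --- it is one of the two statements declared to be ``verified'' directly from the multiplication rules in Table 1, i.e.\ by an entry-by-entry comparison of $8\times 8$ matrices. You instead derive all four identities formally from earlier, independently established results: the observation that the prime and the conjugate act on coordinates as the diagonal involutions $K_8$ and $C_8=\mathrm{diag}(1,-1,-1,-1,-1,-1,-1,1)$ with $K_8C_8=C_8K_8=S_8$ and $K_8S_8=C_8$ (these diagonal computations do check out, and $\overrightarrow{a'}=K_8\overrightarrow{a}$, $\overrightarrow{\bar a}=C_8\overrightarrow{a}$ follow by inspection of the definitions); the anti-automorphism identities $(ab)'=b'a'$ and $\overline{ab}=\bar b\bar a$ of Proposition \ref{properties}(2), which you correctly convert into $R(a')=K_8L(a)K_8$, $L(a')=K_8R(a)K_8$, $R(\bar a)=C_8L(a)C_8$, $L(\bar a)=C_8R(a)C_8$; and the relation $R(a)=K_8L(a)^TK_8$ of (\ref{ra}) from Proposition \ref{promat}. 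Since all of these inputs precede the present proposition and do not depend on it, there is no circularity, and the substitutions you perform yield exactly the four claimed identities. What your route buys is a transparent explanation of where $S_8$ comes from (it is the product of the two coordinate involutions) and an argument that avoids tedious matrix inspection and would transfer to other signatures; what the paper's computational route buys is independence from Propositions \ref{properties} and \ref{promat} --- indeed your fallback of checking the identities on the basis elements $\be_0,\dots,\be_7$ via Table 1 is essentially the verification the paper has in mind.
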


\begin{pro}\label{pro2.2} Let $E_n$ be the identity matrix of order $n$.
 For $a,b\in C\ell_{1,2}, \,\lambda\in \br$, we have
\begin{itemize}
  \item[(1)]  $a=b\Longleftrightarrow L(a)=L(b)\Longleftrightarrow R(a)=R(b),\, L(\be_0)=R(\be_0)=E_8;$
 \item[(2)] $L(a+b)=L(a)+L(b),\, L(\lambda a)=\lambda L(a);$
  \item[(3)]$ R(a+b)=R(a)+R(b), \,R(\lambda a)=\lambda R(a);$
\item[(4)]  $ L(a)R(b)=R(b)L(a), \,R(ab)=R(b)R(a),\,L(ab)=L(a)L(b).$
\end{itemize}
\end{pro}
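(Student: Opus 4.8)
The plan is to verify Proposition \ref{pro2.2} directly from the explicit matrix formulas in Proposition \ref{promat}, using the fact that left and right multiplication by a fixed element are linear endomorphisms of the $8$-dimensional real vector space $C\ell_{1,2}$. The basic principle throughout is that two elements of $C\ell_{1,2}$ are equal if and only if all eight of their real coordinates agree, and an element $a$ is determined by the linear map $L_a$ (respectively $R_a$) because $L_a(\be_0)=a$ (respectively $R_a(\be_0)=a$); translated to matrices, the first column of $L(a)$ is exactly $\overrightarrow{a}$, and likewise the first column of $R(a)$ is $\overrightarrow{a}$. This already gives part (1): if $L(a)=L(b)$ then comparing first columns gives $a=b$, and conversely $a=b$ trivially forces $L(a)=L(b)$; the same argument works for $R$; and the identities $L(\be_0)=R(\be_0)=E_8$ follow by substituting $a_0=1$, $a_1=\cdots=a_7=0$ into \eqref{la} and \eqref{ra}.

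For parts (2) and (3), I would observe that each entry of the matrix $L(a)$ in \eqref{la} is, by inspection, a real-linear function of the coordinate vector $(a_0,\dots,a_7)$ — in fact each entry is $\pm a_j$ for a single index $j$ — so additivity $L(a+b)=L(a)+L(b)$ and homogeneity $L(\lambda a)=\lambda L(a)$ are immediate from the linearity of the coordinate map $a\mapsto\overrightarrow{a}$ and the distributivity of the Clifford product in each argument. Concretely, $\overrightarrow{(a+b)x}=\overrightarrow{ax+bx}=L(a)\overrightarrow{x}+L(b)\overrightarrow{x}$ for all $x$, hence $L(a+b)=L(a)+L(b)$; similarly for the scalar. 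Part (3) is the identical argument applied to $R$ via $\overrightarrow{x(a+b)}=\overrightarrow{xa}+\overrightarrow{xb}$, or alternatively it follows from \eqref{ra} since conjugation by the fixed invertible matrix $K_8$ and transposition are both linear operations.

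Part (4) is the heart of the proposition and is where associativity of the Clifford product does the work. For $L(ab)=L(a)L(b)$: for every $x\in C\ell_{1,2}$ we have $\overrightarrow{(ab)x}=L(ab)\overrightarrow{x}$, while also $(ab)x=a(bx)$ gives $\overrightarrow{(ab)x}=\overrightarrow{a(bx)}=L(a)\overrightarrow{bx}=L(a)L(b)\overrightarrow{x}$; since this holds for all $\overrightarrow{x}\in\br^8$, the two matrices coincide. For $R(ab)=R(b)R(a)$: $\overrightarrow{x(ab)}=R(ab)\overrightarrow{x}$ and $x(ab)=(xa)b$ gives $\overrightarrow{x(ab)}=\overrightarrow{(xa)b}=R(b)\overrightarrow{xa}=R(b)R(a)\overrightarrow{x}$, again for all $x$. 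For the commuting relation $L(a)R(b)=R(b)L(a)$: use associativity in the form $(ax)b=a(xb)$, so $L(a)R(b)\overrightarrow{x}=L(a)\overrightarrow{xb}=\overrightarrow{a(xb)}=\overrightarrow{(ax)b}=R(b)\overrightarrow{ax}=R(b)L(a)\overrightarrow{x}$ for all $x$.

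I do not anticipate a genuine obstacle here; the only mild bookkeeping is that part (4) as literally stated uses the matrix identities of Proposition \ref{promat} as a black box, so one should either (a) accept \eqref{la} and \eqref{ra} as given and run the ``evaluate on all $\overrightarrow{x}$'' argument above, which is clean and coordinate-free modulo the isomorphism, or (b) if one insists on a purely matrix-level proof, expand the $8\times 8$ products entrywise and match against Table \ref{tab1}, which is routine but tedious. I would present approach (a), since Proposition \ref{promat} is already available in the excerpt; the key point to emphasize is that a linear map on $\br^8$ is determined by its action on all vectors, so equality of $L(\cdot)$ or $R(\cdot)$ matrices reduces to equality of the corresponding Clifford-algebra products, and associativity then finishes each identity in part (4).
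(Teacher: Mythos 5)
Your proposal is correct and follows essentially the same route as the paper: the core identities in part (4) are proved exactly as in the paper's proof, by evaluating $L(ab)$, $R(ab)$ and $L(a)R(b)$ on an arbitrary vector $\overrightarrow{x}$ and invoking associativity of the Clifford product, while parts (1)--(3) (which the paper dismisses as obvious) are filled in with routine linearity and first-column observations. No discrepancy worth noting.
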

\begin{proof}  By Proposition \ref{promat}, we have  $L(ab)\overrightarrow{x}=\overrightarrow{abx}=L(a)\overrightarrow{bx}=L(a)L(b)\overrightarrow{x}$. Hence $L(ab)=L(a)L(b)$. Similarly,  $R(ab)\overrightarrow{x}=\overrightarrow{xab}=R(b)\overrightarrow{xa}=R(b)R(a)\overrightarrow{x}$ infers that
$ R(ab)=R(b)R(a)$. By $\overrightarrow{axb}=L(a)\overrightarrow{xb}=L(a)R(b)\overrightarrow{x}$ and
$\overrightarrow{axb}=R(b)\overrightarrow{ax}=R(b)L(a)\overrightarrow{x}$, we have $L(a)R(b)=R(b)L(a)$.  The other results are obvious.
\end{proof}

Let $Mat(8,\br)$ be the set of real matrices of order 8. By Proposition \ref{pro2.2}, $C\ell_{1,2}$ and $Mat(8,\br)$ can be thought of as the rings $(C\ell_{1,2},+,\cdot)$ and  $( Mat(8,\br),+,\cdot)$. Then we have the following theorem. \begin{thm}\label{ringiso}
Denote the map $$L:C\ell_{1,2}\to Mat(8,\br)$$ by
 \begin{equation}\label{isomorphmat}
	L: a\to L(a).
\end{equation}
Then $L$ is a ring homomorphism from $(C\ell_{1,2},+,\cdot)$ to $(Mat(8,\br),+,\cdot)$. Especially, let $im(L)$ be the image of such a homomorphism. Then $$L:C\ell_{1,2}\to im(L)$$ is a ring isomorphism.
	\end{thm}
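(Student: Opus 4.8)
The plan is to read off the theorem directly from Proposition~\ref{pro2.2}, since essentially all the algebraic content is already recorded there. First I would check that $L$ is a ring homomorphism. Additivity, $L(a+b)=L(a)+L(b)$, is item~(2) of Proposition~\ref{pro2.2}; multiplicativity, $L(ab)=L(a)L(b)$, is part of item~(4); and $L(\be_0)=E_8$ is part of item~(1), so $L$ also sends the multiplicative identity of $C\ell_{1,2}$ to the identity matrix $E_8$. Together these are exactly the statement that $L$ is a (unital) ring homomorphism from $(C\ell_{1,2},+,\cdot)$ to $(Mat(8,\br),+,\cdot)$. If one wanted these facts from scratch rather than by citation, they follow from Proposition~\ref{promat} via the identities $\overrightarrow{(a+b)x}=(L(a)+L(b))\overrightarrow{x}$ and $\overrightarrow{abx}=L(a)\overrightarrow{bx}=L(a)L(b)\overrightarrow{x}$ for all $x$, which is precisely the computation done in the proof of Proposition~\ref{pro2.2}.

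Next I would observe that the image $im(L)$ is a subring of $Mat(8,\br)$: it contains $E_8=L(\be_0)$, and it is closed under addition and multiplication because $L(a)+L(b)=L(a+b)$ and $L(a)L(b)=L(ab)$ again lie in $im(L)$. Hence the corestriction $L\colon C\ell_{1,2}\to im(L)$ is a surjective ring homomorphism onto a genuine ring, and it remains only to establish injectivity.

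For injectivity, the cleanest route is to notice that the first column of the matrix in (\ref{la}) is exactly $\overrightarrow{a}=(a_0,a_1,\dots,a_7)^T$; equivalently, $L(a)\overrightarrow{\be_0}=\overrightarrow{a\be_0}=\overrightarrow{a}$. Thus $L(a)=L(b)$ forces $\overrightarrow{a}=\overrightarrow{b}$, hence $a=b$. (This is also the content of the equivalence $a=b\Longleftrightarrow L(a)=L(b)$ in item~(1) of Proposition~\ref{pro2.2}.) Therefore $L\colon C\ell_{1,2}\to im(L)$ is a bijective ring homomorphism, i.e.\ a ring isomorphism, which completes the proof.

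There is no real obstacle here: the proposition immediately preceding the theorem does all the work, and the only points that must be stated carefully are that $im(L)$ is closed under the two ring operations (so that it is a legitimate codomain ring) and that a homomorphism which is bijective onto its image is an isomorphism of rings. The one genuinely computational ingredient is the verification $L(ab)=L(a)L(b)$, but this has already been handled through the matrix--vector description $\overrightarrow{ax}=L(a)\overrightarrow{x}$ built from Table~\ref{tab1}, so nothing new needs to be computed.
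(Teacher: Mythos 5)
Your proposal is correct and follows essentially the same route the paper intends: the theorem is stated without a separate proof precisely because it is an immediate consequence of Proposition \ref{pro2.2} (additivity, multiplicativity, $L(\be_0)=E_8$, and the injectivity encoded in $a=b\Longleftrightarrow L(a)=L(b)$), which is exactly what you cite. Your added remarks that $im(L)$ is a subring and that the first column of $L(a)$ recovers $\overrightarrow{a}$ simply make explicit what the paper leaves implicit.
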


\begin{pro}
	$\det(L(a))=\det(R(a))=(N(a)^2+4T(a)^2)^2=P(a)^2.$
\end{pro}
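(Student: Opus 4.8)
The plan is to compute $\det(L(a))$ directly, exploiting the block structure of $L(a)$ coming from the decomposition $C\ell_{1,2}=C\ell_{1,1}+C\ell_{1,1}\be_4$ in Proposition \ref{subalgebra}. Writing $L(a)$ in $4\times 4$ blocks, one sees from (\ref{la}) that
\[
L(a)=\begin{pmatrix} L_h & -L_H' \\ L_H & L_h' \end{pmatrix},
\]
where $L_h, L_H$ are $4\times 4$ real matrices built from $(a_0,a_1,a_2,a_3)$ and $(a_4,a_5,a_6,a_7)$ respectively (these are essentially the left-regular representation matrices of the split quaternion components $a_h$ and $a_H$ in $C\ell_{1,1}$), and $L_h', L_H'$ are the corresponding matrices for the primed elements. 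First I would identify these blocks precisely and record, from the known $C\ell_{1,1}$ theory (or a direct check), that $\det L_h = N(a_h)^2$ and similarly for the other blocks, together with the commutation relations among them that the split-quaternion multiplication forces.

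The main computational step is to reduce the $8\times 8$ determinant to a product of two $4\times 4$ determinants. Since the off-diagonal block $L_H$ commutes appropriately with $L_h'$ (both are built over the commutative-enough substructure, and in fact $L(a)R(b)=R(b)L(a)$-type identities from Proposition \ref{pro2.2} plus $L(ab)=L(a)L(b)$ give the needed relations), I can apply the standard block-determinant formula $\det\begin{pmatrix}A&B\\C&D\end{pmatrix}=\det(AD-BC)$ valid when $C$ and $D$ commute — or, more robustly, factor $L(a)$ as a product of block-triangular matrices. This yields $\det L(a)=\det(L_hL_h'+L_H'L_H)$. The $4\times 4$ matrix $M:=L_hL_h'+L_H'L_H$ should turn out to be a scalar-type matrix over the center or a split-quaternion matrix whose determinant is readily $(N(a)^2+4T(a)^2)^2 / (\text{something})$; the cleanest route is to recognize $M$ as $L(a\bar a')$ or a similar explicit product, evaluate that product in $C\ell_{1,2}$ using Proposition \ref{properties}, and read off that $\det M=P(a)^2$. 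Concretely, Proposition \ref{properties}(6) gives $a(N(a)-2T(a)\be_7)\bar a = P(a)$, which strongly suggests the relevant $4\times4$ block evaluates to $P(a)\cdot E_4$ after the right normalization, making its determinant $P(a)^4$ — one must then track the normalizing factor carefully so the final answer comes out as $P(a)^2$, not $P(a)^4$.

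An alternative, possibly cleaner, approach avoids blocks entirely: use multiplicativity $\det L(ab)=\det L(a)\det L(b)$ (from Proposition \ref{pro2.2}(4)) together with the identity $a\,(N(a)-2T(a)\be_7)\,\bar a = P(a)$ from Proposition \ref{properties}(6). Taking $L$ of both sides, $\det L(a)\cdot\det L(N(a)-2T(a)\be_7)\cdot\det L(\bar a)=\det L(P(a))=P(a)^8$, since $P(a)$ is a real scalar and $L$ of a real scalar $r$ is $rE_8$. One computes directly that $\det L(\bar a)=\det L(a)$ — for instance from (\ref{bara}), $L(\bar a)=S_8L(a)^TS_8$, so $\det L(\bar a)=\det L(a)$ — and that $\det L(N(a)-2T(a)\be_7)$ is an explicit function of $N(a),T(a)$: since $\be_7$ is central, $L(\be_7)$ is a specific involution-like matrix whose relevant eigenvalues are $\pm\bi$ (as real $2\times2$ rotation blocks), giving $\det L(u+v\be_7)=(u^2+v^2)^4$; hence $\det L(N(a)-2T(a)\be_7)=(N(a)^2+4T(a)^2)^4=P(a)^4$. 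Substituting, $(\det L(a))^2\cdot P(a)^4=P(a)^8$, so $\det L(a)^2=P(a)^4$, whence $\det L(a)=\pm P(a)^2$; the sign is fixed to $+$ by evaluating at $a=\be_0$ (where $\det L(\be_0)=\det E_8=1=P(\be_0)^2$) and invoking continuity of $\det L(a)$ in the coefficients of $a$. Finally $\det R(a)=\det L(a)$ follows at once from (\ref{ra}), since $R(a)=K_8L(a)^TK_8$ and $\det K_8^2=1$.

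The hard part will be handling the square-root sign and, if one goes the block route, pinning down the exact normalizing constant relating the $4\times 4$ determinant of the reduced block to $P(a)$; the multiplicativity-plus-Proposition \ref{properties}(6) argument sidesteps the block bookkeeping but still requires the clean evaluation of $\det L(u+v\be_7)=(u^2+v^2)^4$, which is a short explicit $8\times 8$ computation organized by the action of the central element $\be_7$.
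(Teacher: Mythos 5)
Your second (``alternative'') argument is the one that comes close to a proof, and it uses essentially the paper's toolkit: multiplicativity of $\det\circ L$, $\det L(\bar a)=\det L(a)$ from (\ref{bara}), and the evaluation $\det L(u+v\be_7)=(u^2+v^2)^4$ via the block form $L(\be_7)=\bigl(\begin{smallmatrix}0&-M\\ M&0\end{smallmatrix}\bigr)$, $M^2=E_4$. The difference is the pivot identity: the paper applies this to Proposition \ref{properties}(3), $a\bar a=N(a)+2T(a)\be_7$, which yields $\det L(a)^2=\det L(a\bar a)=P(a)^4$ as an identity valid for \emph{every} $a$; you instead apply it to Proposition \ref{properties}(6), which yields $\det L(a)^2\cdot P(a)^4=P(a)^8$, and your next step ``so $\det L(a)^2=P(a)^4$'' divides by $P(a)^4$. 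When $P(a)=0$ that equation is the vacuous $0=0$ and gives no information about $\det L(a)$, yet the proposition asserts exactly that $\det L(a)=0$ in this case. That is a genuine gap as written. It is easily repaired --- either replace (6) by (3), which is the paper's route and removes the division entirely, or note that both sides are polynomials in $a_0,\dots,a_7$ agreeing on the dense set $\{P\neq0\}$ --- but the $P(a)=0$ case must be addressed.

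Two secondary points. Your sign argument ($\det L(a)=\pm P(a)^2$, fixed by continuity from $a=\be_0$) tacitly needs the set $\{a:P(a)\neq0\}$ to be connected; a cleaner fix is to work in the polynomial ring, where $(\det L-P^2)(\det L+P^2)=0$ identically forces one factor to vanish identically, and evaluation at $\be_0$ picks the right one. (The paper silently skips the sign issue altogether, so you are actually more careful here.) Your first, block-determinant route is not a proof as it stands: the formula $\det\bigl(\begin{smallmatrix}A&B\\ C&D\end{smallmatrix}\bigr)=\det(AD-BC)$ requires the lower blocks to commute, which you assert but do not verify (and it is not automatic, since multiplication matrices of different elements generally do not commute), and you leave the final normalization of the reduced $4\times4$ determinant undetermined; that route should be dropped or reworked in favor of the argument above.
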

\begin{proof} Let
	$M=\left(
	\begin{array}{cccc}
		0 &0   &0   & 1\\
		0 &  0& 1 &0 \\
		0 & 1 & 0 & 0\\
		1&0  & 0&0
	\end{array}\right)$.  Then
\begin{equation}\label{le7} L(\be_7)=\left(
	\begin{array}{cc}
		0 & -M \\
		M & 0
	\end{array}\right),\ \,  M^2=E_4.\end{equation}    By Proposition \ref{properties}, we have
	$$L(a\bar{a})=L(N(a)+2T(a)\be_7)=\left(
	\begin{array}{cc}
		N(a)E_4 & -2T(a)M \\
		2T(a)M &N(a)E_4
	\end{array}\right).$$
	Thus $$\det(L(a\bar{a}))=(N(a)^2+4T(a)^2)^4=P(a)^4.$$
		   By Proposition \ref{pro2.2}, we have  $$L(a\bar{a})=L(a)L(\bar{a})=L(a)S_8L(a)^TS_8.$$ Hence $\det(L(a\bar{a}))=\det(L(a))^2$ and therefore  $\det(L(a))=P(a)^2$.
		   Since 	$R(a)=K_8 L(a)^TK_8$, we have $\det(L(a))=\det(R(a)).$
\end{proof}

\begin{pro}\label{proegi}
	The eigenvalues of  $L(a)$
	are given by
	
$$ \lambda_{1,2}=a_0+a_7\bi\pm
\sqrt{(a_0+a_7\bi)^2-N(a)-2T(a)\bi}
$$
and
$$ \lambda_{3,4}=a_0-a_7\bi\pm
\sqrt{(a_0-a_7\bi)^2-N(a)+2T(a)\bi}
$$	
	where $\bi=\sqrt{-1}\in \bc$  and  each eigenvalue occurs with algebraic
	multiplicity 2.
\end{pro}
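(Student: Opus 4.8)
The plan is to extract the eigenvalues of $L(a)$ from a quadratic equation satisfied by $a$ whose coefficients lie in the centre, exploiting the complex structure $J:=L(\be_{7})$. First I would record the identity, valid for all $a\in C\ell_{1,2}$,
$$a^{2}-2(a_{0}+a_{7}\be_{7})a+N(a)+2T(a)\be_{7}=0,$$
which follows at once from $\bar a=2Cre(a)-a=2(a_{0}+a_{7}\be_{7})-a$, from $a_{0}+a_{7}\be_{7}\in Cent(C\ell_{1,2})$ (Proposition~\ref{subalgebra}(2)), and from $a\bar a=N(a)+2T(a)\be_{7}$ (Proposition~\ref{properties}(3)). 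Applying the ring homomorphism $L$ of Theorem~\ref{ringiso} and writing $J=L(\be_{7})$ turns this into the matrix identity
$$L(a)^{2}-2\bigl(a_{0}E_{8}+a_{7}J\bigr)L(a)+N(a)E_{8}+2T(a)J=0.$$
By~(\ref{le7}) and $M^{2}=E_{4}$ we have $J^{2}=-E_{8}$, so over $\bc$ the space $\bc^{8}$ splits as $V_{+}\oplus V_{-}$ with $V_{\pm}=\ker(J\mp\bi E_{8})$ and $\dim_{\bc}V_{+}=\dim_{\bc}V_{-}=4$ (the two dimensions are equal since $J$ is real).

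Because $\be_{7}$ is central, $L(a)$ commutes with $J$ by Proposition~\ref{pro2.2}(4), hence preserves $V_{\pm}$. Restricting the above matrix identity to $V_{+}$, where $J$ acts as the scalar $\bi$, shows that $L(a)|_{V_{+}}$ is annihilated by $z^{2}-2(a_{0}+a_{7}\bi)z+N(a)+2T(a)\bi$, whose roots are exactly $\lambda_{1,2}$; restricting to $V_{-}$, where $J$ acts as $-\bi$, likewise produces $\lambda_{3,4}$. Thus every eigenvalue of $L(a)$ is one of $\lambda_{1},\lambda_{2},\lambda_{3},\lambda_{4}$.

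The remaining, and most delicate, point is that each $\lambda_{i}$ occurs with algebraic multiplicity exactly $2$. For this I would split $V_{+}$ once more, using that $R(\be_{1})$ and $R(\be_{2})$ commute with $J$ and with every $L(a)$ (Proposition~\ref{pro2.2}(4)) and that $R(\be_{1})^{2}=E_{8}$, $R(\be_{2})^{2}=-E_{8}$, $R(\be_{1})R(\be_{2})=-R(\be_{2})R(\be_{1})$ by the multiplication table. On $V_{+}$ the involution $R(\be_{1})$ has eigenspaces $W^{+},W^{-}$ which the invertible operator $R(\be_{2})$ interchanges, so $\dim W^{+}=\dim W^{-}=2$; moreover $L(a)|_{V_{+}}$ preserves each $W^{\pm}$ and $R(\be_{2})$ conjugates $L(a)|_{W^{+}}$ to $L(a)|_{W^{-}}$, so the characteristic polynomial of $L(a)|_{V_{+}}$ is the square of that of the $2\times2$ matrix $L(a)|_{W^{+}}$. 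Since $L(a)|_{W^{+}}$ is killed by $(z-\lambda_{1})(z-\lambda_{2})$, its characteristic polynomial equals this quadratic unless $L(a)|_{W^{+}}$ is a scalar matrix; and in that degenerate case $L(a)|_{V_{+}}$ is scalar, hence (as $L(a)$ is real, $L(a)|_{V_{-}}$ is its entrywise conjugate and also scalar) $L(a)$ commutes with every $L(b)$, forcing $a$ central, $a=a_{0}+a_{7}\be_{7}$, whence $N(a)+2T(a)\bi=(a_{0}+a_{7}\bi)^{2}$, so $\lambda_{1}=\lambda_{2}$ and the statement still holds. Thus the characteristic polynomial of $L(a)|_{V_{+}}$ is $[(z-\lambda_{1})(z-\lambda_{2})]^{2}$; the same argument on $V_{-}$ (or complex conjugation, noting $\overline{\lambda_{1,2}}=\lambda_{3,4}$) gives $[(z-\lambda_{3})(z-\lambda_{4})]^{2}$, and multiplying yields the claim. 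I expect this multiplicity bookkeeping — in particular isolating and disposing of the scalar/central case — to be the only real obstacle, since everything else is forced by the central quadratic and by $J^{2}=-E_{8}$.
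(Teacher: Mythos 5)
Your proof is correct, but it takes a genuinely different route from the paper's. The paper fixes an eigenvalue $\lambda$ of $L(a)$, forms $U=(\lambda E_8-L(a))(\lambda E_8-L(\bar a))$, uses the block shape $U=(\lambda^2-2a_0\lambda+N(a))E_8+2(T(a)-\lambda a_7)L(\be_7)$ together with $UU^T=\big[(\lambda^2-2a_0\lambda+N(a))^2+4(T(a)-\lambda a_7)^2\big]E_8$ and $\det U=0$ to obtain the real quartic constraint on $\lambda$, and then factors it over $\bc$ into your two quadratics; the multiplicity-two claim is only asserted there, not argued. You instead complexify, diagonalize the central complex structure $J=L(\be_7)$, restrict the identity $L(a)^2-2(a_0E_8+a_7J)L(a)+N(a)E_8+2T(a)J=0$ (which is the same algebraic fact, $a\bar a=N(a)+2T(a)\be_7$ with $\bar a=2Cre(a)-a$, that underlies the paper's $U$) to the $\pm\bi$ eigenspaces of $J$, and then use the commuting right multiplications $R(\be_1),R(\be_2)$ to split each four-dimensional eigenspace into two $L(a)$-invariant planes exchanged by $R(\be_2)$, which pins the characteristic polynomial down to $\prod_i(z-\lambda_i)^2$, with the scalar case correctly reduced to $a$ central and $\lambda_1=\lambda_2$. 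What your version buys is an actual proof of the multiplicity statement (and of the fact that all four listed values really occur as eigenvalues), which the paper glosses over; what the paper's version buys is brevity, a single determinant computation with no invariant-subspace or $R(\be_1),R(\be_2)$ bookkeeping.
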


\begin{proof} Let $\lambda$ be an eigenvalue of $L(a)$. Then
$\det(\lambda E_8-L(a))=0$.  Note that  $L(\bar{a})=S_8L(a)^TS_8$. Therefore such a  $\lambda$ is also a eigenvalue of $L(\bar{a})$, i.e.  $\det(\lambda
E_8-L(\bar{a}))=0$.
Let $$U=(\lambda
E_8-L(a))(\lambda E_8-L(\bar{a})).$$
Then
$$\det(U)=0.$$
 Note that
$$U=\lambda^2
E_8-\lambda L(a+\bar{a})+L(a\bar{a})=(\lambda^2-2\lambda
a_0+N(a))E_8+2(T(a)-\lambda a_7)L(\be_7),$$
that is
$$U=\left(
\begin{array}{cc}
	(\lambda^2-2\lambda
	a_0+N(a))E_4 &-2(T(a)-\lambda a_7)M\\
2(T(a)-\lambda a_7)M& (\lambda^2-2\lambda
a_0+N(a))E_4
\end{array}\right).$$
Hence $$UU^T=[(\lambda^2-2\lambda
a_0+N(a))^2+4(T(a)-\lambda a_7)^2]E_8.$$
Therefore
 \begin{equation} (\lambda^2-2\lambda
 	a_0+N(a))^2+4(T(a)-\lambda a_7)^2=0.\end{equation}
  Thus
\begin{equation} \lambda^2-2\lambda(a_0+a_7\bi)+N(a)+2T(a)\bi=0\end{equation} or
\begin{equation} \lambda^2-2\lambda(a_0-a_7\bi)+N(a)-2T(a)\bi=0.\end{equation}
The
solutions of the above two equations are the eigenvalues of
$L(a)$ and each eigenvalue  occurs with algebraic multiplicity
2.
\end{proof}

\begin{exam}
    Let $a=1-\be_1+\be_2+\be_3-\be_7$. Then $N(a)=-1$, $T(a)=-1$ and
    $$L(a)=\left(
	\begin{array}{cccccccc}
		1 & -1  & -1  & 1  & 0 & 0  &  0  & 1  \\
	   -1 &  1  & -1  & 1  & 0 & 0  &  1  & 0  \\
		1 & -1  &  1  &-1  & 0 & 1  &  0  & 0 \\
		1 & -1  & -1  & 1  & 1 & 0  &  0  & 0 \\
		0 &  0  &  0  &-1  & 1 &-1  & -1  & 1  \\
		0 &  0  & -1  & 0  &-1 & 1  & -1  & 1  \\
		0 & -1  &  0  & 0  & 1 &-1  &  1  &-1 \\
	   -1 &  0  &  0  & 0  & 1 &-1  & -1  & 1
	\end{array}
	\right).$$
The solutions of $det(\lambda E_8-L(a))=0$ are given by
$$\lambda_1=2-\bi,\,\,\lambda_2=-\bi,\,\,\lambda_3=2+\bi,\,\,\lambda_4=\bi,$$
where each eigenvalue occurs with algebraic multiplicity 2.
\end{exam}

\begin{pro}\label{uinverse}  Let $a,b,c\in C\ell_{1,2}$. Then
	 \begin{itemize}
	 	\item[(1)] If $ab=1$ then  $ba=1$;
	 	\item[(2)]  If $ab=1$ and $ac=1$ then $b=c$.
	 \end{itemize}
	 	 \end{pro}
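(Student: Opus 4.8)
The plan is to transport both statements through the ring isomorphism $L$ of Theorem \ref{ringiso} and thereby reduce them to elementary facts about $8\times 8$ real matrices. First I would apply $L$ to the hypothesis $ab=1$: by Proposition \ref{pro2.2}(4) one has $L(ab)=L(a)L(b)$, and by Proposition \ref{pro2.2}(1) one has $L(1)=E_8$, so the equation $ab=1$ in $C\ell_{1,2}$ is equivalent to the matrix equation $L(a)L(b)=E_8$ in $Mat(8,\br)$.

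Next I would invoke the standard fact that a one-sided inverse of a square matrix is automatically a two-sided inverse. Concretely, from $L(a)L(b)=E_8$ we get $\det L(a)\,\det L(b)=1$, so both $L(a)$ and $L(b)$ are invertible in $Mat(8,\br)$ and $L(b)=L(a)^{-1}$; hence also $L(b)L(a)=E_8$. Since $L(b)L(a)=L(ba)$ by Proposition \ref{pro2.2}(4) and $E_8=L(1)$ by Proposition \ref{pro2.2}(1), the injectivity of $L$ (again Proposition \ref{pro2.2}(1)) gives $ba=1$. This proves (1).

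For (2), the quickest route is to combine (1) with associativity in $C\ell_{1,2}$: from $ab=1$ and $ac=1$, part (1) gives $ba=1$, whence $b=b\cdot 1=b(ac)=(ba)c=1\cdot c=c$. Alternatively, and staying on the matrix side, $ab=ac$ yields $L(a)L(b)=L(a)L(c)$, and left-multiplication by $L(a)^{-1}$ (which exists by the argument above, or because $\det L(a)=P(a)^2\neq 0$ from the preceding proposition) gives $L(b)=L(c)$, hence $b=c$.

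I do not expect a genuine obstacle here; the content is entirely the translation $C\ell_{1,2}\leftrightarrow im(L)$. The only step meriting a word of justification is the passage from $L(a)L(b)=E_8$ to $L(b)L(a)=E_8$, which is precisely where the finite-dimensionality of $Mat(8,\br)$ (equivalently, the nonvanishing of $\det L(a)$) is used; everything else is bookkeeping with Propositions \ref{pro2.2} and \ref{promat}.
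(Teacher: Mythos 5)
Your proposal is correct and takes essentially the same route as the paper: pass through $L$ to $Mat(8,\br)$, use that a one-sided inverse of a square matrix is two-sided to obtain $L(b)L(a)=E_8$ and hence $ba=1$, and deduce (2) by cancelling the invertible $L(a)$ (the paper writes this as $L(a)(L(b)-L(c))=0$, which is the same cancellation you perform). Your alternative derivation of (2) directly from (1) via associativity is a minor, equally valid variation.
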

\begin{proof}
By Theorem \ref{ringiso}, if $ab=1$ then $L(a)L(b)=E_8$. So $L(a)$ is invertible.  Hence we have $L(b)L(a)=E_8$, which implies that $ba=1$.  Also, if $ab=1$ and $ac=1$ then we have $a(b-c)=0$ and $L(a)(L(b)-L(c))=0$, which implies that $b=c$.
\end{proof}

\begin{defi}  For $a\in C\ell_{1,2}$, if there exists  an element  $b\in C\ell_{1,2}$ such that $ab=ba=1$ then  $b$ is called the inverse of $a$ and denoted by $b=a^{-1}$.
\end{defi}

Proposition \ref{uinverse} implies that the inverse of $a$ is unique. By Proposition \ref{properties}, we have

\begin{pro}\label{inverse}
 $a\in C\ell_{1,2}$ is invertible if and only if $P(a)\neq 0$ and in this case
$$a^{-1}= \frac{(N(a)-2T(a)\be_7)}{P(a)}\bar{a}.$$
\end{pro}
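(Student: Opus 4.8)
The plan is to use the ring isomorphism $L$ together with the formula $a\bar a=\bar a a=N(a)+2T(a)\be_7$ from Proposition \ref{properties}(3) and the companion identity (6) that isolates the scalar $P(a)$.

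First I would establish the "if" direction by exhibiting the candidate inverse. Set $b=\frac{N(a)-2T(a)\be_7}{P(a)}\bar a$, which makes sense precisely when $P(a)\neq 0$. To compute $ab$, note that since $N(a)-2T(a)\be_7$ lies in the center $\br+\br\be_7$ (Proposition \ref{subalgebra}(2)), it commutes with everything, so
$$
ab=\frac{1}{P(a)}\,a(N(a)-2T(a)\be_7)\bar a=\frac{1}{P(a)}\,a\bar a\,(N(a)-2T(a)\be_7).
$$
By Proposition \ref{properties}(6) this equals $\frac{P(a)}{P(a)}=1$. The same centrality argument gives $ba=\frac{1}{P(a)}\bar a a (N(a)-2T(a)\be_7)=\frac{1}{P(a)}\cdot a\bar a(N(a)-2T(a)\be_7)=1$ as well (using $\bar a a=a\bar a$). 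Hence $b=a^{-1}$, and by Proposition \ref{uinverse} this inverse is unique.

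For the "only if" direction, suppose $a$ is invertible, so there is $b$ with $ab=1$. Applying $L$ and using that $L$ is a ring homomorphism (Theorem \ref{ringiso}) with $L(1)=E_8$, we get $L(a)L(b)=E_8$, so $L(a)$ is an invertible matrix, i.e. $\det(L(a))\neq 0$. By the proposition computing the determinant, $\det(L(a))=P(a)^2$, so $P(a)\neq 0$. (Alternatively, one can argue directly: if $P(a)=0$ then by Proposition \ref{properties}(6), $a\cdot(N(a)-2T(a)\be_7)\bar a=0$; a short case analysis shows $a$ is then a zero divisor, hence not invertible — but invoking the determinant is cleaner.)

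I do not expect a serious obstacle here: the identities in Proposition \ref{properties} have been pre-packaged exactly for this computation, and the only subtlety is remembering that $N(a)-2T(a)\be_7$ is central so it can be slid freely through the product — this is what lets identity (6) be applied in both orders to yield $ab=ba=1$. The cleanest writeup simply verifies $ab=ba=1$ for the "if" part and uses $\det(L(a))=P(a)^2$ for the "only if" part.
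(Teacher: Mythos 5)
Your proposal is correct and takes essentially the same route as the paper, which proves the formula by citing Proposition \ref{properties}: identity (6), $a(N(a)-2T(a)\be_7)\bar a=a\bar a(N(a)-2T(a)\be_7)=P(a)$, combined with the fact that $N(a)-2T(a)\be_7$ lies in the center $\br+\br\be_7$, exactly as you argue. The only cosmetic difference is in the ``only if'' direction, where the paper can conclude $P(a)\neq 0$ directly from the multiplicativity $P(ab)=P(a)P(b)$ with $P(1)=1$, while you pass through $\det(L(a))=P(a)^2$; both facts are established earlier and the arguments are equivalent.
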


\begin{exam}
	Let $a=1+\be_2+\be_4$. Then $P(a)=9$ and
		$$a^{-1}=\frac{1-\be_2-\be_4}{3}.$$
\end{exam}

\section{The Moore-Penrose inverse of elements in $C\ell_{1,2}$}\label{mpinvsec}

\qquad We recall that the Moore-Penrose inverse of a real matrix $A$ is the unique  real matrix $X$ satisfying the following equations:
$$AXA=A,\,XAX=X,\,(AX)^T=AX,\,(XA)^T=XA,$$
 We denote the Moore-Penrose inverse of $A$ by $A^+$.

Note that $L(a')=L(a)^T$.  By  Theorem \ref{ringiso} and the concept of  Moore-Penrose inverse of real matrices,  we have the following lemma.

\begin{lem}\label{lemueq}
Let $a\in C\ell_{1,2}$. Then there is a unique $x\in C\ell_{1,2}$ satisfying the following equations:
\begin{equation}\label{mpeqs} axa=a,\,xax=x, \, (ax)'=ax, (xa)'=xa. \end{equation}
\end{lem}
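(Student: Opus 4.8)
The plan is to transport the problem to the matrix ring via the isomorphism $L$ of Theorem \ref{ringiso} and invoke the classical existence and uniqueness of the Moore--Penrose inverse of a real matrix. First I would recall that for any $a \in C\ell_{1,2}$ the matrix $L(a) \in Mat(8,\br)$ has a unique Moore--Penrose inverse $L(a)^+ \in Mat(8,\br)$ characterised by $L(a)L(a)^+L(a)=L(a)$, $L(a)^+L(a)L(a)^+=L(a)^+$, $(L(a)L(a)^+)^T=L(a)L(a)^+$ and $(L(a)^+L(a))^T=L(a)^+L(a)$. The key subtlety is that $L(a)^+$ need not lie in $im(L)$, so I cannot immediately pull it back to $C\ell_{1,2}$; the crux of the argument is to show that in fact $L(a)^+ \in im(L)$.

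To establish that, I would use the explicit structure already developed. By Proposition \ref{properties}(6) and the identities $L(\bar a)=S_8L(a)^TS_8=S_8L(a')^{TT}\cdots$ — more precisely $L(\bar a)$ and $L(a')=L(a)^T$ — one sees that $L(a)$ commutes with $L(a)^T$ up to the explicit conjugations by $S_8$ and $K_8$, and that $a\bar a$ is central-like ($=N(a)+2T(a)\be_7$). A cleaner route: when $P(a)\neq 0$, $a$ is invertible by Proposition \ref{inverse}, so $L(a)^{-1}=L(a^{-1}) \in im(L)$ and the unique $x$ is simply $a^{-1}$, which obviously satisfies \eqref{mpeqs}. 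When $P(a)=0$, I would argue that the candidate
$$ x \;=\; \frac{1}{\|a\|^2}\,a' \qquad \text{(suitably normalised, or more robustly built from $\bar a$, $a'$ and the trace functionals)} $$
lies in $C\ell_{1,2}$ by construction, and then verify directly, using Proposition \ref{pro2.2}(4) ($L(ab)=L(a)L(b)$, $L(a)R(b)=R(b)L(a)$) together with $L(a')=L(a)^T$, that $L(x)$ satisfies the four Penrose equations for $L(a)$; by uniqueness of the matrix Moore--Penrose inverse this forces $L(x)=L(a)^+$, hence $L(a)^+ \in im(L)$. Pulling back through the ring isomorphism $L$, the four matrix equations $L(a)L(x)L(a)=L(a)$ etc. translate verbatim into $axa=a$, $xax=x$, $(ax)'=ax$, $(xa)'=xa$ because $L$ is injective, $L(uv)=L(u)L(v)$, and $L(u')=L(u)^T$.

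Uniqueness in $C\ell_{1,2}$ then comes for free: if $x_1,x_2$ both satisfy \eqref{mpeqs}, apply $L$ to get two matrices both satisfying the four Penrose equations for $L(a)$; by uniqueness of $L(a)^+$ we get $L(x_1)=L(x_2)$, and injectivity of $L$ gives $x_1=x_2$.

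I expect the main obstacle to be the $P(a)=0$ case, specifically producing the correct closed-form candidate $x$ and checking it genuinely satisfies all four equations — the ``Hermitian-type'' conditions $(ax)'=ax$ and $(xa)'=xa$ are where the algebra is most delicate, since $'$ corresponds to transpose and one must pin down the rank-reduced structure of $L(a)$ when $\det L(a)=P(a)^2=0$. A safe fallback that avoids explicit case analysis is to not exhibit $x$ at all in this lemma, but only to prove $L(a)^+\in im(L)$ abstractly: one checks that $L(a)^+$ is a polynomial in $L(a)$ and $L(a)^T=L(a')$ with coefficients expressible via $N,T$, and since $im(L)$ is a subalgebra closed under $a\mapsto a'$ (because $L(a')=L(a)^T$) and contains $L(a)$, it contains this polynomial; then existence and uniqueness both follow from the matrix case by transport of structure. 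I would present the invertible case explicitly and handle the singular case by this closure argument, deferring any explicit formula for $x$ to a later section.
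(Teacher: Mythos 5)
Your overall strategy is sound, and your fallback argument is a genuinely different route from the paper's. The paper obtains uniqueness exactly as you do (apply $L$, invoke uniqueness of the matrix Moore--Penrose inverse, use injectivity of $L$), but it does not really prove existence inside this lemma at all: existence is supplied constructively later, by taking $x=a^{-1}$ when $P(a)\neq 0$ and $x=a'/(4K)$ with $K=a_0^2+a_2^2+a_4^2+a_6^2$ when $P(a)=0$, the verification resting on the identity $(a'a)(a'a)=4K\,a'a$, i.e.\ on Lemmas \ref{lemT3} and \ref{lem3.3}. Your closure argument --- $L(a)^+$ is a polynomial in $L(a)$ and $L(a)^T$, and $im(L)$ is an $\br$-subalgebra containing $L(a)$ and closed under transpose because $L(a')=L(a)^T$, hence $L(a)^+\in im(L)$ and pulls back through the ring isomorphism --- is correct and squarely addresses the point the paper glosses over, namely that a priori $L(a)^+$ might fail to lie in $im(L)$. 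It buys existence with no case analysis and no computation; the paper's computation buys the explicit closed form that is actually needed for Theorem \ref{thm}.

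Two points need tightening. First, the classical fact you invoke should be stated precisely and either cited or proved: $A^+=p(A^TA)\,A^T$ for some real polynomial $p$ (this follows from $A^+=(A^TA)^+A^T$ together with the fact that the Moore--Penrose inverse of the symmetric matrix $A^TA$ is a polynomial in it, by spectral decomposition); the coefficients are merely real numbers depending on $a$, and your parenthetical claim that they are ``expressible via $N,T$'' is neither needed nor obviously true. Second, your primary (explicit) route is not actually carried out: the normalisation is wrong as written --- the correct candidate is $a'/\bigl(2\sum_{t=0}^{7}a_t^2\bigr)=a'/(4K)$, not $a'/\|a\|^2$ --- and checking $axa=a$ and $xax=x$ is precisely where the nontrivial identity $T_1^2+T_3^2+T_5^2=K^2$ (valid only when $P(a)=0$) enters, so that computation cannot simply be deferred if you keep that route. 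As you propose to present it, with the closure argument doing the work in the singular case, the proof is complete modulo the cited matrix fact.
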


\begin{defi}
	Let $a\in C\ell_{1,2}$. The unique solution $x\in C\ell_{1,2}$ of (\ref{mpeqs}) is called the Moore-Penrose inverse of $a$.
\end{defi}

Obviously, $0^+=0$ and   $a^+=a^{-1}$ if $P(a)\neq 0$.

Let
\begin{equation}\label{zcl12}Z(C\ell_{1,2})=\{a\in C\ell_{1,2}: P(a)=0 \}.\end{equation}

\vspace{1mm}

In order to find the  Moore-Penrose inverse of $a\in Z(C\ell_{1,2})-\{0\}$, we need the following lemma.

\begin{lem} \label{lem3.2}
Let $a\in C\ell_{1,2}-\{0\}$. Then

 \begin{itemize}
	\item[(1)] The following two equations \begin{equation} \label{ew1} ax=0,\,\,
		\, a'ax=0\end{equation} have the same solutions;
	\item[(2)] The following two equations \begin{equation} \label{ew2} xa'=0,\,\,
		\, xa'a=0\end{equation} have the same solutions.
\end{itemize}
	\end{lem}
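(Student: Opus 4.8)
The plan is to translate both claimed equivalences into statements about the real matrices $L(a)$ and $R(a)$ via the ring isomorphism of Theorem \ref{ringiso}, where they become the familiar fact that for a real matrix $B$ one has $\ker(B) = \ker(B^TB)$. Consider part (1). The equation $ax=0$ is equivalent to $L(a)\overrightarrow{x}=0$ by Proposition \ref{promat}, and $a'ax=0$ is equivalent to $L(a'a)\overrightarrow{x}=0$, i.e. to $L(a')L(a)\overrightarrow{x}=0$ by Proposition \ref{pro2.2}(4). Since $L(a')=L(a)^T$, this last equation reads $L(a)^TL(a)\overrightarrow{x}=0$. Thus part (1) is exactly the assertion $\ker L(a) = \ker\bigl(L(a)^TL(a)\bigr)$, which holds for any real matrix: one inclusion is trivial, and for the other, $L(a)^TL(a)\overrightarrow{x}=0$ implies $\overrightarrow{x}^T L(a)^T L(a)\overrightarrow{x}=0$, i.e. $\|L(a)\overrightarrow{x}\|^2=0$, hence $L(a)\overrightarrow{x}=0$.

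For part (2), the equation $xa'=0$ is equivalent to $R(a')\overrightarrow{x}=0$ by Proposition \ref{promat}, and $xa'a=0$ is equivalent to $R(a'a)\overrightarrow{x}=0$, which by Proposition \ref{pro2.2}(4) equals $R(a)R(a')\overrightarrow{x}=0$. Since $R(a')=R(a)^T$, this is $R(a)R(a)^T\overrightarrow{x}=0$, so part (2) is the assertion $\ker\bigl(R(a)^T\bigr) = \ker\bigl(R(a)R(a)^T\bigr)$, again a general fact about real matrices (apply the previous paragraph's argument to $R(a)^T$ in place of $L(a)$). This completes both parts.

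I expect there is essentially no obstacle here; the only points requiring a moment's care are the bookkeeping of which map ($L$ or $R$) corresponds to left versus right multiplication and the identities $L(a')=L(a)^T$, $R(a')=R(a)^T$, both of which are recorded in the excerpt. It is worth noting that the hypothesis $a\neq 0$ is not actually needed for the conclusions — the statement holds trivially for $a=0$ as well — but including it does no harm and matches how the lemma will be applied (to compute the Moore-Penrose inverse of nonzero elements of $Z(C\ell_{1,2})$).
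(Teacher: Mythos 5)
Your proof is correct and follows essentially the same route as the paper: translate both equations into matrix form via $L$ (and $R$ for part (2)) and invoke the standard fact $\ker(B)=\ker(B^TB)$, which the paper simply calls obvious while you spell out the norm argument. No issues.
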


\begin{proof}
	By Theorem \ref{ringiso}, if $ax=0$ then 	$\overrightarrow{ax}=L(a)\overrightarrow{x}=0$.  If $a'ax=0$ then 	$\overrightarrow{a'ax}=L(a)^TL(a)\overrightarrow{x}=0$. It is obvious that $L(a)\overrightarrow{x}=0$  and $L(a)^TL(a)\overrightarrow{x}=0$ have the same solutions. This proves (1). Similarly, we can prove (2).
\end{proof}
For $a\in Z(C\ell_{1,2})-\{0\}$,  we define    $$T_1=a_0a_1-a_2a_3-a_4a_5+a_6a_7;$$ $$T_3=a_0a_3+a_1a_2+a_4a_7+a_5a_6;$$
$$T_5=a_0a_5+a_1a_4- a_2a_7-a_3a_6.$$
By direct computation, we have the following Proposition.
\begin{pro}\label{pro3.1ad} For $a\in Z(C\ell_{1,2})-\{0\}$, we  have
 \begin{equation}\label{aap}a'a=\sum_{t=0}^{7}a_t^2+2T_1\be_1+2T_3\be_3+2T_5\be_5.\end{equation}
Let $v=v_0+v_1\be_1+v_3\be_3+v_5\be_5$. Then $v'=v$ and  \begin{equation}\label{vp} v'v=v_0^2+v_1^2+v_3^2+v_5^2+2v_0(v_1\be_1+v_3\be_3+v_5\be_5).\end{equation}
\end{pro}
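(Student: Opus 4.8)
The statement is a finite computation, so the plan is to organize it cleanly rather than expand $a'a$ blindly. I would exploit the ring isomorphism of Theorem~\ref{ringiso} together with the identity $L(a')=L(a)^T$. By Proposition~\ref{promat}, $\overrightarrow{a'a}=L(a')\overrightarrow{a}=L(a)^T\overrightarrow{a}$, and $\overrightarrow{a}=(a_0,a_1,\dots,a_7)^T$ is precisely the first column of $L(a)$. Hence the coordinate vector of $a'a$ equals $L(a)^T$ applied to the first column of $L(a)$; equivalently, the coefficient of $\be_t$ in $a'a$ is the dot product of the $t$-th column of $L(a)$, read off from~(\ref{la}), with $(a_0,\dots,a_7)$, for $t=0,1,\dots,7$ (using the paper's $0$-indexing).

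Carrying this out column by column: column $0$ gives $\sum_{t=0}^{7}a_t^2$, the scalar part. Columns $1$, $3$, $5$ give, respectively, $2(a_0a_1-a_2a_3-a_4a_5+a_6a_7)=2T_1$, then $2(a_0a_3+a_1a_2+a_4a_7+a_5a_6)=2T_3$, and $2(a_0a_5+a_1a_4-a_2a_7-a_3a_6)=2T_5$. Columns $2$, $4$, $6$, $7$ each yield a sum of four products which cancel in pairs, hence contribute $0$. Collecting these eight coefficients gives exactly~(\ref{aap}). (One observes incidentally that this identity holds for every $a\in C\ell_{1,2}$; the hypothesis $P(a)=0$ merely fixes the context in which it will be used. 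Alternatively, one may multiply $a'$ by $a$ directly using Table~\ref{tab1} and collect like terms — this is the ``direct computation'' alluded to, and produces the same eight coefficients.)

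For the second assertion, put $v=v_0+v_1\be_1+v_3\be_3+v_5\be_5$, that is, $v_2=v_4=v_6=v_7=0$. The definition of the prime then gives $v'=v_0+v_1\be_1+v_3\be_3+v_5\be_5=v$ immediately. Applying~(\ref{aap}) with $a=v$, the sum $\sum_t v_t^2$ collapses to $v_0^2+v_1^2+v_3^2+v_5^2$, while $T_1=v_0v_1$, $T_3=v_0v_3$, $T_5=v_0v_5$; substituting yields $v'v=v_0^2+v_1^2+v_3^2+v_5^2+2v_0(v_1\be_1+v_3\be_3+v_5\be_5)$, which is~(\ref{vp}).

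There is no genuine obstacle here — the whole argument is bookkeeping. The one place demanding care is the pairwise cancellation in columns $2$, $4$, $6$, $7$ of $L(a)$ (equivalently, that the $\be_2$-, $\be_4$-, $\be_6$-, $\be_7$-components of $a'a$ vanish identically): the sign pattern in~(\ref{la}) must be tracked carefully there, but once one of these four columns is verified the others are entirely analogous.
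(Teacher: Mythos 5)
Your proposal is correct, and it is essentially the paper's argument: the paper offers no proof beyond the phrase ``by direct computation,'' and your column-by-column evaluation of $L(a)^T\overrightarrow{a}$ (equivalently, expanding $a'a$ via Table 1) is exactly that computation, cleanly organized, with the $\be_2,\be_4,\be_6,\be_7$ components indeed cancelling in pairs. Your side remark that (\ref{aap}) holds for every $a\in C\ell_{1,2}$, with the hypothesis $P(a)=0$ playing no role, is also accurate and is implicitly used when specializing to $v$.
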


\begin{lem} \label{lemT3} For $a\in Z(C\ell_{1,2})-\{0\}$ with $P(a)=0$, let \begin{equation}K=a_0^2+a_2^2+a_4^2+a_6^2.\end{equation} Then we have
\begin{equation} T_1^2+T_3^2+T_5^2=K^2. \end{equation}
\end{lem}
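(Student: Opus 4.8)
The plan is to avoid a brute-force expansion in the eight coordinates $a_0,\dots,a_7$ and instead exploit the very special form of $a'a$ together with the multiplicativity of $P$. Write $v=a'a$. By Proposition \ref{pro3.1ad} we have $v=v_0+v_1\be_1+v_3\be_3+v_5\be_5$ with $v_0=\sum_{t=0}^{7}a_t^2$, $v_1=2T_1$, $v_3=2T_3$, $v_5=2T_5$; in particular the $\be_2,\be_4,\be_6,\be_7$ components of $v$ vanish. Since $P(a)=0$ forces $N(a)=0$, and $N(a)=(a_0^2+a_2^2+a_4^2+a_6^2)-(a_1^2+a_3^2+a_5^2+a_7^2)$, we get $a_1^2+a_3^2+a_5^2+a_7^2=K$, hence $v_0=\sum_{t}a_t^2=2K$. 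Thus the asserted identity $T_1^2+T_3^2+T_5^2=K^2$ is exactly the statement $v_1^2+v_3^2+v_5^2=v_0^2$.

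Next I would evaluate $P(v)$ directly from the definitions. Because $v$ has only $\be_0,\be_1,\be_3,\be_5$ components, reading off the formulas for $N$ and $T$ gives $T(v)=0$ and $N(v)=v_0^2-v_1^2-v_3^2-v_5^2$, so $P(v)=N(v)^2+4T(v)^2=(v_0^2-v_1^2-v_3^2-v_5^2)^2$. On the other hand, Proposition \ref{properties}(1) gives $N(a')=N(a)$ and $T(a')=-T(a)$, hence $P(a')=P(a)$, and then Proposition \ref{properties}(7) yields $P(v)=P(a'a)=P(a')P(a)=P(a)^2=0$. Comparing the two expressions for $P(v)$ forces $v_0^2-v_1^2-v_3^2-v_5^2=0$, i.e. $(2K)^2=(2T_1)^2+(2T_3)^2+(2T_5)^2$, which gives $T_1^2+T_3^2+T_5^2=K^2$ after dividing by $4$.

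The only place needing care is the bookkeeping: correctly identifying $v_0,v_1,v_3,v_5$ from (\ref{aap}), checking that $N(a)=0$ is used to pass from $\sum a_t^2$ to $2K$, and evaluating $N(v),T(v)$ for the sparse element $v$. There is no genuine obstacle here, since everything reduces to Propositions \ref{pro3.1ad}, \ref{properties}(1) and \ref{properties}(7). The route I would deliberately avoid is the direct one — expanding $T_1^2+T_3^2+T_5^2-(a_0^2+a_2^2+a_4^2+a_6^2)^2$ as a degree-four polynomial and showing it vanishes on the variety $N(a)=T(a)=0$ — because that is where all the unpleasant computation would be concentrated, whereas the argument above isolates it into the single clean identity $P(a'a)=P(a)^2$.
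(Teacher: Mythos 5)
Your argument is correct, and it is genuinely different from the paper's proof. The paper proceeds by direct expansion: from $P(a)=0$ it extracts $N(a)=0$ and $T(a)=0$, rewrites these as $K^2=(a_0^2+a_2^2+a_4^2+a_6^2)(a_1^2+a_3^2+a_5^2+a_7^2)$ and $a_0a_7+a_2a_5=a_1a_6+a_3a_4$, and then verifies by a Lagrange-type ``direct computation'' that $K^2-(T_1^2+T_3^2+T_5^2)$ collapses to $\bigl((a_0a_7+a_2a_5)-(a_1a_6+a_3a_4)\bigr)^2=T(a)^2=0$. You instead compute $P(a'a)$ in two ways: structurally, $P(a'a)=P(a')P(a)=P(a)^2=0$ by Proposition \ref{properties} (1) and (7); and coordinate-wise, since by Proposition \ref{pro3.1ad} the element $v=a'a$ has only $\be_0,\be_1,\be_3,\be_5$ components, so $T(v)=0$ and $P(v)=N(v)^2=\bigl(v_0^2-v_1^2-v_3^2-v_5^2\bigr)^2$ with $v_0=\sum_t a_t^2=2K$ (here $N(a)=0$ is used), $v_1=2T_1$, $v_3=2T_3$, $v_5=2T_5$; equating the two evaluations gives $4K^2=4(T_1^2+T_3^2+T_5^2)$. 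Every step checks out against the propositions you cite, and there is no circularity, since Propositions \ref{properties} and \ref{pro3.1ad} are established independently of the lemma. The trade-off is clear: the paper's route is self-contained arithmetic but buries the nontrivial polynomial identity in the words ``by direct computation,'' whereas your route outsources all computation to facts already on record (the sparse form (\ref{aap}) and the multiplicativity of $P$), making the vanishing structural; in fact your calculation proves the more general identity $4(T_1^2+T_3^2+T_5^2)=\bigl(\sum_t a_t^2\bigr)^2-P(a)$ for arbitrary $a$, of which the lemma is the case $P(a)=0$. A small further shortcut you could note: by Proposition \ref{properties} (1) and (5) one has $N(a'a)=N(a)^2+4T(a)^2=P(a)$ directly, so you do not even need the multiplicativity of $P$ or the observation $T(a'a)=0$.
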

\begin{proof}
		If $P(a)=0$ then $N(a)=0$ and $T(a)=0$. Thus $$K=a_0^2+a_2^2+a_4^2+a_6^2=a_1^2+a_3^2+a_5^2+a_7^2=\frac{1}{2}\sum_{t=0}^{7}a_t^2$$ and $$a_0a_7+a_2a_5=a_1a_6+a_3a_4.$$
	So we have $$(a_0a_7+a_2a_5)^2+(a_1a_6+a_3a_4)^2=2(a_0a_7+a_2a_5)(a_1a_6+a_3a_4)$$and  $$K^2=(a_0^2+a_2^2+a_4^2+a_6^2)(a_1^2+a_3^2+a_5^2+a_7^2).$$
	Let $Q=K^2-(T_1^2+T_3^2+T_5^2)$. By direct computation, we have 	
$$Q=(a_0a_7+a_2a_5)^2+(a_1a_6+a_3a_4)^2-2(a_0a_7+a_2a_5)(a_1a_6+a_3a_4)=0.$$
	\end{proof}

\begin{lem} \label{lem3.3} For $a\in Z(C\ell_{1,2})-\{0\}$ with $P(a)=0$ and $K=a_0^2+a_2^2+a_4^2+a_6^2$, let  $$x=\frac{a'}{4K}.$$
Then \begin{equation} axa=a,\,xax=x, \, (ax)'=ax, (xa)'=xa. \end{equation}
\end{lem}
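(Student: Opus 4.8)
The plan is to verify the four Moore–Penrose equations directly, using Proposition \ref{properties}, Proposition \ref{pro3.1ad}, and Lemma \ref{lemT3}. The key algebraic identity I would establish first is
$$a'a = 2K + 2T_1\be_1 + 2T_3\be_3 + 2T_5\be_5,$$
which follows from (\ref{aap}) together with the observation that, when $P(a)=0$, one has $\sum_{t=0}^{7}a_t^2 = 2K$ (this is exactly the computation $K=a_0^2+a_2^2+a_4^2+a_6^2=a_1^2+a_3^2+a_5^2+a_7^2$ from the proof of Lemma \ref{lemT3}, which uses $N(a)=0$). Set $w = a'a$; then $w' = w$ since $w$ lies in $\mathrm{span}\{\be_0,\be_1,\be_3,\be_5\}$, and $w$ is of the form $v$ in Proposition \ref{pro3.1ad} with $v_0 = 2K$, so by (\ref{vp}),
$$w'w = w^2 = (2K)^2 + (2T_1)^2 + (2T_3)^2 + (2T_5)^2 + 2(2K)\bigl(2T_1\be_1 + 2T_3\be_3 + 2T_5\be_5\bigr).$$
Now invoking Lemma \ref{lemT3}, $T_1^2+T_3^2+T_5^2 = K^2$, so the scalar part collapses to $4K^2 + 4K^2 = 8K^2 = 4K\cdot 2K$, giving the crucial relation
$$w^2 = (a'a)^2 = 4K\,(a'a).$$

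Next I would compute $ax = a\cdot\frac{a'}{4K} = \frac{aa'}{4K}$. Note $aa' = (a'a)'$ by Proposition \ref{properties}(2)? — more directly, since $a'a$ is fixed by the prime (it equals $w$ with $w'=w$), and $(a'a)' = a'(a')' = a'a$ automatically; but what I actually need is $aa'$, so instead I would observe $aa' = \overline{(a')\,\overline{a}}$-type manipulations are not needed: simply compute that $aa'$ also lies in $\mathrm{span}\{\be_0,\be_1,\be_3,\be_5\}$ with the same scalar part $2K$ (by a symmetric direct computation, or by noting $aa' = (a''a')' $... ) — the cleanest route is to apply the whole argument above verbatim to $a'$ in place of $a$, since $a' \in Z(C\ell_{1,2})-\{0\}$ with the same $N,T$ and the same $K$ by Proposition \ref{properties}(1), yielding $(aa')^2 = 4K(aa')$ as well, and $aa'$ fixed by prime so $(ax)' = ax$. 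Then $axa = \frac{aa'a}{4K} = \frac{a(a'a)}{4K}$; using $a(a'a) = ?$ — here I would use that $a'a = w$ and want $aw = 4K\,a$. This follows because $a w = a\cdot\frac{a'a \cdot a'a}{4K\cdot ?}$... the honest verification is: $a(a'a)^2 = 4K\, a(a'a)$, and separately one shows $a'a$ and $aa'$ are related so that $a(a'a) = (aa')a$; then from $(aa')^2 = 4K(aa')$ we get $(aa')a\,a' \cdot(\ldots)$. The clean statement I will prove is $a\cdot a'a = aa'\cdot a$ (associativity, trivial) and $(aa')\cdot a = ?$; combining $aa'aa' = 4K\,aa'$ with right-cancellation issues — since we cannot cancel, instead multiply $(aa')^2=4K(aa')$ on the right by nothing and note $axa = \frac{(aa')a}{4K}$, then $ax\cdot axa = \frac{(aa')^2 a}{16K^2} = \frac{4K(aa')a}{16K^2} = \frac{(aa')a}{4K} = axa$; this shows $axax = ax$ after the right factor, i.e. gives $xax = x$ once $axa = a$ is known.

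The remaining and genuinely load-bearing step is $axa = a$, equivalently $aa'a = 4K\,a$. I would prove this by showing $a\bigl(a'a - 4K\bigr) = 0$ via Lemma \ref{lem3.2}(1): it suffices to show $a'a\,(a'a - 4K) = 0$, which is precisely $(a'a)^2 = 4K(a'a)$, already established. By Lemma \ref{lem3.2}(1) with $x$ replaced by $a'a-4K$ (the lemma says $a\xi=0 \iff a'a\xi=0$), we conclude $a(a'a-4K)=0$, hence $aa'a = 4Ka$, i.e. $axa=a$. Symmetrically, Lemma \ref{lem3.2}(2) applied to $\xi = a'a - 4K$ (noting $\xi' = \xi$) gives $(a'a-4K)a' = 0$ from $(a'a-4K)a'a = 0$, hence $a'aa' = 4Ka'$, i.e. $a'\cdot 4K\,x = 4K\,a'$ rearranges to $(xa)a' \cdot$... more directly $xa = \frac{a'a}{4K}$ is fixed by prime so $(xa)'=xa$, and $xax = \frac{a'(aa')a'}{(4K)^2}$; using $a'aa' = 4Ka'$ gives $a' a a' = 4K a'$, so... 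I will organize the four verifications as: (i) $axa=a$ from $(a'a)^2=4K(a'a)$ plus Lemma \ref{lem3.2}(1); (ii) $xax=x$ from (i) and the computation $axax=ax$; (iii) $(ax)'=ax$ and (iv) $(xa)'=xa$ since both $aa'$ and $a'a$ lie in the prime-fixed subspace $\mathrm{span}\{\be_0,\be_1,\be_3,\be_5\}$. The main obstacle is purely bookkeeping: confirming that $aa'$ (not just $a'a$) has scalar part $2K$ and lies in the right subspace, and that $T_1,T_3,T_5$ are the correct coefficients — this is the direct computation the statement's proof (not shown) presumably carries out, and I would double-check it against Table \ref{tab1}.
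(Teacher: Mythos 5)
Your proposal is essentially the paper's own proof: establish $a'a=2K+2T_1\be_1+2T_3\be_3+2T_5\be_5$ via Proposition \ref{pro3.1ad}, use Lemma \ref{lemT3} to get the key identity $(a'a)^2=4K(a'a)$, and then transfer this through Lemma \ref{lem3.2} to obtain the first two Moore--Penrose equations, with the prime-symmetry of $ax$ and $xa$ handled separately. Two corrections to the write-up, though. First, your final organization of step (ii) is invalid as stated: from $axa=a$ together with $axax=ax$ you cannot conclude $xax=x$, because $a$ is a zero divisor (here $P(a)=0$) and cannot be cancelled on the left; in general $AXA=A$ never forces $XAX=X$. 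The correct argument is the one you already wrote earlier in the same paragraph, and it is the paper's: apply Lemma \ref{lem3.2}(2) to $\xi=a'a-4K$, so that $\xi a'a=(a'a)^2-4K(a'a)=0$ gives $\xi a'=0$, i.e. $a'aa'=4Ka'$, whence $xax=\frac{a'aa'}{16K^2}=\frac{a'}{4K}=x$; keep that route and drop the cancellation-style shortcut (also fix the typo: $xax=\frac{a'aa'}{16K^2}$, not $\frac{a'(aa')a'}{(4K)^2}$). Second, the detour of rerunning the whole computation with $a'$ in place of $a$ just to see that $aa'$ is prime-fixed is unnecessary: Proposition \ref{properties}(2) gives $(aa')'=(a')'a'=aa'$ and $(a'a)'=a'(a')'=a'a$ in one line, which is exactly how the paper disposes of the conditions $(ax)'=ax$ and $(xa)'=xa$.
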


\begin{proof}Since $ax=\frac{aa'}{4K}$ and $xa=\frac{a'a}{4K}$, by Proposition \ref{properties}, we have  \begin{equation}(ax)'=ax,
	\,\,\, (xa)'=xa.\end{equation} 	
If $P(a)=0$ then $N(a)=0$ and $T(a)=0$. Thus $K=\frac{1}{2}\sum_{t=0}^{7}a_t^2$  and $$a_0a_7+a_2a_5=a_1a_6+a_3a_4.$$
Let $v=a'a$. By Proposition \ref{pro3.1ad} and Lemma  \ref{lem3.3} we have $v=v'=2K+2T_1\be_1+2T_3\be_3+2T_5\be_5$  and $$v'v=4K^2+4T_1^2+4T_3^2+4T_5^2+4K(2T_1\be_1+2T_3\be_3+2T_5\be_5)=8K^2+4K(a'a-2K).$$
 It follows from Lemma \ref{lemT3} that
 \begin{equation}\label{eqimpt}v'v=(a'a)(a'a)=4Ka'a.\end{equation}
By Lemma \ref{lem3.2}, $a(xa-1)=0$ is equivalent to \begin{equation}\label{qqeq1}a'a(\frac{a'a}{4K}-1)=0
\end{equation} and  $(xa-1)x=0$ is equivalent to \begin{equation}\label{qqeq2}(\frac{a'a}{4K}-1)a'a=0.
\end{equation} Equations (\ref{qqeq1}) and  (\ref{qqeq2}) are just (\ref{eqimpt}). Thus we have $axa=a,\,xax=x$.
\end{proof}

By Lemmas \ref{lemueq} and \ref{lem3.3}, we have the following theorem

\begin{thm} \label{thm} Let $a\in C\ell_{1,2}$. Then $$a^+=\left\{
	\begin{array}{ll}
		0, & \hbox{ if } a=0; \\
		\frac{(N(a)-2T(a)\be_7)}{P(a)}\bar{a}, & \hbox{ if } P(a)\neq 0; \\
		\frac{a'}{4(a_0^2+a_2^2+a_4^2+a_6^2)}, & \hbox{ if } a\neq 0 \hbox{ and } P(a)=0. \\
	\end{array}%
	\right.$$
	\end{thm}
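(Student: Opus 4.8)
The plan is to assemble the three cases of Theorem~\ref{thm} directly from the results already proved, since each branch of the claimed formula is precisely the content of an earlier statement. For $a=0$ the assertion $a^+=0$ is the trivial remark made immediately after the definition of the Moore--Penrose inverse, verified by checking that $x=0$ satisfies all four equations in (\ref{mpeqs}). For $P(a)\neq 0$, Proposition~\ref{inverse} gives $a^{-1}=\frac{N(a)-2T(a)\be_7}{P(a)}\bar a$, and the remark $a^+=a^{-1}$ whenever $a$ is invertible (itself immediate, since the genuine inverse solves $axa=a$, $xax=x$, and makes $ax=xa=1$, whose prime is itself) yields the second line. For $a\neq 0$ with $P(a)=0$, Lemma~\ref{lem3.3} shows that $x=\frac{a'}{4K}$ with $K=a_0^2+a_2^2+a_4^2+a_6^2$ satisfies the four defining equations (\ref{mpeqs}).

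The one gap to close before invoking Lemma~\ref{lem3.3} is that it is stated for $a\in Z(C\ell_{1,2})-\{0\}$, i.e.\ it presupposes $P(a)=0$, which is exactly the hypothesis of the third branch; so no extra work is needed there. The only genuine logical point is \emph{uniqueness}: the Moore--Penrose inverse is defined as \emph{the} solution of (\ref{mpeqs}), so to conclude $a^+$ equals the displayed expression I must know that whatever solves (\ref{mpeqs}) is unique. This is supplied by Lemma~\ref{lemueq}, which asserts existence and uniqueness of $x$ satisfying (\ref{mpeqs}) via the ring isomorphism $L$ of Theorem~\ref{ringiso}: under $L$, the system (\ref{mpeqs}) transcribes (using $L(a')=L(a)^T$) into the four classical Penrose equations for the real matrix $L(a)$, whose solution $L(a)^+$ exists and is unique in $Mat(8,\br)$; one must also note $L(a)^+$ lies in $im(L)$, which follows because the explicit $x$ produced in each case maps to it. Hence in every case the explicitly exhibited $x$ is \emph{the} Moore--Penrose inverse.

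So the proof is simply: split on whether $a=0$, $P(a)\neq 0$, or ($a\neq 0$ and $P(a)=0$); in each case exhibit the candidate $x$ from the displayed formula; verify it solves (\ref{mpeqs}) by citing the trivial check, Proposition~\ref{inverse}, or Lemma~\ref{lem3.3} respectively; and invoke Lemma~\ref{lemueq} for uniqueness to identify this $x$ with $a^+$. I do not anticipate a real obstacle—the substantive computations (the identity $v'v=4Ka'a$ resting on Lemma~\ref{lemT3}, and the determinant/eigenvalue facts) are all already in place. The only thing to be careful about in writing is to state clearly that each branch's hypotheses are mutually exclusive and exhaustive, so that the piecewise formula is well defined, and to make explicit that uniqueness is what licenses writing ``$=$'' rather than merely ``is a'' Moore--Penrose inverse.
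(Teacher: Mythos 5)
Your proposal is correct and follows essentially the same route as the paper: the paper's proof of Theorem \ref{thm} is exactly the combination of Lemma \ref{lemueq} (existence and uniqueness of the solution of (\ref{mpeqs})) with Lemma \ref{lem3.3} for the case $a\neq 0$, $P(a)=0$, together with the earlier observations that $0^+=0$ and $a^+=a^{-1}$ (Proposition \ref{inverse}) when $P(a)\neq 0$. Your additional remarks on mutual exclusivity of the cases and on why uniqueness licenses the equality are sound but add nothing beyond the paper's argument.
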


\begin{exam}
	Let $a=\be_1+\be_2$. Then $P(a)=0$ and
	$$a^{+}=\frac{\be_1-\be_2}{4}.$$
\end{exam}

	By Theorem \ref{ringiso} and Lemma \ref{lemueq}, we have
\begin{pro}
	\begin{equation}\label{laplus}L(a^+)=L(a)^+\end{equation}
	and
	\begin{equation}\label{raplus}R(a^+)=R(a)^+.\end{equation}
\end{pro}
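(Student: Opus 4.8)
The statement to prove is that $L(a^{+})=L(a)^{+}$ and $R(a^{+})=R(a)^{+}$, where on the left $a^{+}$ is the Moore–Penrose inverse in $C\ell_{1,2}$ (Lemma~\ref{lemueq}) and on the right the superscript $+$ denotes the Moore–Penrose inverse of a real matrix. The plan is to verify directly that $L(a^{+})$ satisfies the four defining matrix equations for the Moore–Penrose inverse of $L(a)$, and then invoke the uniqueness of the Moore–Penrose inverse of a real matrix.

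First I would recall the characterizations already available: $L$ is a ring homomorphism (Theorem~\ref{ringiso}), so it sends products to products; it is additive by Proposition~\ref{pro2.2}; and $L(\be_0)=E_8$. Crucially, by Proposition 2.2 (the one stating $L(a')=L(a)^{T}$) we have $L(c')=L(c)^{T}$ for every $c\in C\ell_{1,2}$. Now take $x=a^{+}$, the element supplied by Lemma~\ref{lemueq}, which satisfies $axa=a$, $xax=x$, $(ax)'=ax$, $(xa)'=xa$. Applying $L$ to the first two equations and using the homomorphism property gives
$$L(a)L(x)L(a)=L(a),\qquad L(x)L(a)L(x)=L(x).$$
Applying $L$ to $(ax)'=ax$ and using $L(c')=L(c)^{T}$ together with $L(ax)=L(a)L(x)$ gives $(L(a)L(x))^{T}=L(a)L(x)$; similarly $(L(x)L(a))^{T}=L(x)L(a)$. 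Thus $L(x)$ satisfies all four Penrose equations for $L(a)$, so by uniqueness $L(x)=L(a)^{+}$, i.e.\ $L(a^{+})=L(a)^{+}$.

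For the second identity I would run the identical argument with $R$ in place of $L$: by Proposition~\ref{pro2.2}, $R(a+b)=R(a)+R(b)$ and $R(ab)=R(b)R(a)$, so $R$ is a ring anti-homomorphism, and by the companion of Proposition 2.2 we have $R(c')=R(c)^{T}$. Applying $R$ to $axa=a$ and $xax=x$ yields $R(a)R(x)R(a)=R(a)$ and $R(x)R(a)R(x)=R(x)$ (the order reversal is harmless since these are three-fold symmetric products evaluated at the same two matrices). Applying $R$ to $(ax)'=ax$ gives $R(x)^{T}R(a)^{T}=\big(R(a)R(x)\big)^{T}$ on one side; here one must be slightly careful, because $R(ax)=R(x)R(a)$, so $(ax)'=ax$ becomes $(R(x)R(a))^{T}=R(x)R(a)$, and likewise $(xa)'=xa$ becomes $(R(a)R(x))^{T}=R(a)R(x)$. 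These are exactly the symmetry conditions $(R(a)R(x))^{T}=R(a)R(x)$ and $(R(x)R(a))^{T}=R(x)R(a)$ required for $R(x)$ to be the Moore–Penrose inverse of $R(a)$. Hence $R(a^{+})=R(a)^{+}$.

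The only real subtlety — and the step I would flag as needing the most care — is bookkeeping the transpose-and-reversal in the two symmetry equations for the $R$ case, to be sure the two mixed products $R(a)R(x)$ and $R(x)R(a)$ get matched to the correct one of $(ax)'=ax$ versus $(xa)'=xa$; once that is done the argument is a formal consequence of the homomorphism (resp.\ anti-homomorphism) property, the relation $L(c')=L(c)^{T}$ (resp.\ $R(c')=R(c)^{T}$), and uniqueness of the matrix Moore–Penrose inverse. No new computation with the multiplication table is required.
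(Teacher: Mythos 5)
Your proof is correct and is exactly the argument the paper intends: it states the proposition as an immediate consequence of Theorem \ref{ringiso} and Lemma \ref{lemueq}, i.e.\ apply $L$ (resp.\ $R$) to the four equations (\ref{mpeqs}), use $L(c')=L(c)^T$ (resp.\ $R(c')=R(c)^T$), and invoke uniqueness of the matrix Moore--Penrose inverse. Your careful matching of $(ax)'=ax$ and $(xa)'=xa$ to the reversed products in the anti-homomorphism $R$ case is the right bookkeeping, which the paper leaves implicit.
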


\section{Linear equation $axb=d$}\label{eqsec}

\qquad The following lemma is well known in matrix theory.
\begin{lem}\label{general} Let $A\in \br^{m\times n},  b\in
	\br^{m}$. Then the linear equation $Ax=b$ has a solution if and only if $AA^{+}b=b$, furthermore, the general solution is $$x=A^{+}b+(E_n-A^{+}A)y, \forall y\in \br^{n}.$$
\end{lem}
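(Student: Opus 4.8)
The statement to prove is Lemma~\ref{general}, the standard characterization of solvability and the general solution of a linear system $Ax=b$ over $\br$ in terms of the Moore--Penrose inverse. The plan is to argue entirely in real matrix algebra using the four Penrose identities $AA^+A=A$, $A^+AA^+=A^+$, $(AA^+)^T=AA^+$, $(A^+A)^T=A^+A$, which characterize $A^+$ uniquely; nothing from the Clifford-algebra part of the paper is needed here, so this is a self-contained matrix fact.

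First I would prove the solvability criterion. For the ``only if'' direction, suppose $Ax=b$ has a solution $x_0$. Then $AA^+b=AA^+Ax_0=(AA^+A)x_0=Ax_0=b$, using $AA^+A=A$. For the ``if'' direction, if $AA^+b=b$, then $x=A^+b$ is itself a solution since $A(A^+b)=AA^+b=b$. So $Ax=b$ is solvable iff $AA^+b=b$.

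Next I would verify the form of the general solution, assuming solvability. For any $y\in\br^n$, set $x=A^+b+(E_n-A^+A)y$. Then $Ax=AA^+b+(A-AA^+A)y=b+0=b$, so every such $x$ solves the equation; this uses $AA^+b=b$ and $AA^+A=A$. Conversely, suppose $x$ is any solution, so $Ax=b$. Write $x=A^+b+(x-A^+b)$ and observe $x-A^+b=(E_n-A^+A)x+(A^+Ax-A^+b)=(E_n-A^+A)x+A^+(Ax-b)=(E_n-A^+A)x$, since $Ax=b$. Hence $x=A^+b+(E_n-A^+A)x$, which is of the asserted form with $y=x$. Combining the two inclusions gives that the solution set is exactly $\{A^+b+(E_n-A^+A)y:y\in\br^n\}$.

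There is no real obstacle here; the only point requiring any care is making sure the two directions of the ``general solution'' claim are both addressed (that the displayed family consists of solutions, and that it exhausts all solutions), and invoking the correct Penrose identity at each step — in particular $AA^+A=A$ for the solvability reductions. Since the lemma is explicitly flagged as well known in matrix theory, the write-up can be kept to the few lines of identities above, or one may simply cite a standard reference on generalized inverses.

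\begin{proof}
We use only the defining Penrose identities $AA^+A=A$, $A^+AA^+=A^+$, $(AA^+)^T=AA^+$, $(A^+A)^T=A^+A$.

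Suppose first that $Ax=b$ has a solution $x_0$. Then $AA^+b=AA^+(Ax_0)=(AA^+A)x_0=Ax_0=b$. Conversely, if $AA^+b=b$, then $A(A^+b)=b$, so $x=A^+b$ is a solution. Hence $Ax=b$ is solvable if and only if $AA^+b=b$.

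Now assume $AA^+b=b$. For any $y\in\br^n$, the vector $x=A^+b+(E_n-A^+A)y$ satisfies
$$Ax=AA^+b+(A-AA^+A)y=b+0=b,$$
so it is a solution. Conversely, let $x$ be any solution, so $Ax=b$. Then
$$x=A^+b+\bigl(x-A^+b\bigr)=A^+b+(E_n-A^+A)x+A^+(Ax-b)=A^+b+(E_n-A^+A)x,$$
which is of the stated form with $y=x$. Therefore the general solution is $x=A^+b+(E_n-A^+A)y$, $y\in\br^n$.
\end{proof}
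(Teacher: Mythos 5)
Your proof is correct: the solvability criterion and both inclusions for the general solution follow cleanly from the Penrose identity $AA^+A=A$, and the decomposition $x=A^+b+(E_n-A^+A)x$ for an arbitrary solution $x$ is exactly the standard argument. Note that the paper itself offers no proof of this lemma — it is cited as well known in matrix theory — so your self-contained verification supplies a proof the paper omits, and it is the standard one found in texts on generalized inverses.
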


We are ready to consider the linear equation $axb=d$ in Clifford algebra $C\ell_{1,2}$. It is obvious that we have the following proposition.

\begin{pro}
 If $a,b\in  C\ell_{1,2}$ and both of them are invertible then $$x=a^{-1}db^{-1}$$ is the unique solution of $axb=d$.
 \end{pro}
   So we assume that $a$ and $b$ are not invertible in what follows.

In order to solve some linear equations in Clifford algebra $C\ell_{1,2}$, we  need the following Proposition.

\begin{pro}\label{mpprop}  Let $a,b\in C\ell_{1,2}$.  Then
 \begin{itemize}
	\item[(1)]	$L(a)L(a^+)L(a)=L(a), L(a^+)L(a)L(a^+)=L(a^+),L(a)L(a^+)=\big(L(a)L(a^+)\big)^T,L(a^+)L(a)=\big(L(a^+)L(a)\big)^T;$
	\item[(2)] $R(a)R(a^+)R(a)=R(a), R(a^+)R(a)R(a^+)=R(a^+),R(a)R(a^+)=\big(R(a)R(a^+)\big)^T,R(a^+)R(a)=\big(R(a^+)R(a)\big)^T;$
	\item[(3)] $\big(L(a)R(b)\big)^+=L(a^+)R(b^+).$
\end{itemize}
\end{pro}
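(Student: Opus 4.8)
The plan is to reduce everything to the classical Moore-Penrose theory for real matrices, using the ring isomorphism $L$ of Theorem~\ref{ringiso} together with the identities $L(a')=L(a)^T$ and $L(a^+)=L(a)^+$ from (\ref{laplus}), and likewise $R(a')=R(a)^T$ with (\ref{raplus}). For part~(1), I would simply apply $L$ to the four defining Moore-Penrose equations (\ref{mpeqs}) for $a$. Since $L$ is a ring homomorphism, $L(axa)=L(a)L(x)L(a)$, so $axa=a$ becomes $L(a)L(a^+)L(a)=L(a)$, and similarly $xax=x$ becomes $L(a^+)L(a)L(a^+)=L(a^+)$. For the symmetry conditions, $(ax)'=ax$ means $L((ax)')=L(ax)$, and since $L((ax)')=L(ax)^T$ we get $L(a)L(a^+)=\big(L(a)L(a^+)\big)^T$; the argument for $(xa)'=xa$ is identical. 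Part~(2) is the mirror image: apply $R$ instead of $L$, using $R(ab)=R(b)R(a)$ (so the order of factors reverses, but $R(axa)=R(a)R(x)R(a)$ still holds because the outer factors coincide), $R(a')=R(a)^T$, and (\ref{raplus}).

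For part~(3) I would invoke the uniqueness of the Moore-Penrose inverse of a real matrix: it suffices to verify that $X=L(a^+)R(b^+)$ satisfies the four Penrose equations for the matrix $N=L(a)R(b)$. The key structural fact is Proposition~\ref{pro2.2}(4), which says $L(\cdot)$ and $R(\cdot)$ matrices commute with each other; combined with parts~(1) and (2) of the present proposition this makes the verification essentially mechanical. For instance $NXN = L(a)R(b)L(a^+)R(b^+)L(a)R(b) = \big(L(a)L(a^+)L(a)\big)\big(R(b)R(b^+)R(b)\big) = L(a)R(b) = N$, after regrouping all the $L$'s together and all the $R$'s together. The equation $XNX=X$ is handled the same way. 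For the transpose conditions, $NX = L(a)L(a^+)R(b)R(b^+)$, and since $\big(L(a)L(a^+)\big)^T = L(a)L(a^+)$ and $\big(R(b)R(b^+)\big)^T = R(b)R(b^+)$ and these two symmetric matrices commute, their product is symmetric; similarly for $XN$. By uniqueness of the Moore-Penrose inverse in $Mat(8,\br)$, $X = N^+$, i.e. $\big(L(a)R(b)\big)^+ = L(a^+)R(b^+)$.

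I do not expect a genuine obstacle here; the only point requiring a little care is making sure the regrouping of mixed $L$- and $R$-products is justified at each step, which is exactly what Proposition~\ref{pro2.2}(4) provides, and that when we transpose a product of two commuting symmetric matrices we may use $(PQ)^T = Q^TP^T = QP = PQ$. One should also note in passing that parts~(1) and~(2) are consistent with—indeed a special case of—part~(3) upon taking $b=1$ or $a=1$, since $R(1)=L(1)=E_8$; but it is cleaner to prove~(1) and~(2) directly from (\ref{mpeqs}) as above and then use them as lemmas for~(3).
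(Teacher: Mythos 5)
Your proposal is correct and follows essentially the same route as the paper: parts (1) and (2) by applying the homomorphisms $L$ and $R$ (with $L(a')=L(a)^T$, $R(a')=R(a)^T$) to the defining equations (\ref{mpeqs}), and part (3) by checking the four Penrose equations for $L(a)R(b)$ with candidate $L(a^+)R(b^+)$, regrouping via the commutation $L(a)R(b)=R(b)L(a)$ of Proposition \ref{pro2.2}(4) and invoking uniqueness of the matrix Moore--Penrose inverse. Your explicit justification of the symmetry of $NX$ and $XN$ (product of two commuting symmetric matrices) spells out a step the paper only labels ``similarly,'' but it is the same argument.
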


\begin{proof}
	By  Proposition \ref{pro2.2} and Lemma \ref{lemueq}, we have (1) and (2).
	
    Let $A=L(a)R(b)$ and $X=L(a^+)R(b^+)$. Note that $L(a)R(b)=R(b)L(a),\forall a,b\in C\ell_{1,2}.$ By Proposition \ref{pro2.2}, we have
\begin{align*}
    AXA&=(L(a)R(b))(L(a^+)R(b^+))(L(a)R(b))\\
       &=L(a)L(a^+)L(a)R(b)R(b^+)R(b)\\
        &=L(a)R(b)=A.
\end{align*}
    Similarly, we have $XAX=X,(AX)^T=AX,(XA)^T=XA$. According to the definition of the Moore-Penrose, we have
   $(L(a)R(b))^+=L(a^+)R(b^+)$. This proves (3).
\end{proof}

\begin{thm}
Let $a,b\in Z(C\ell_{1,2})-\{0\}$ and $d\in C\ell_{1,2}$. Then the equation $axb=d$ is solvable if and only if
\begin{equation}\label{cond4.2}
\frac{aa'db'b}{16(a_0^2+a_2^2+a_4^2+a_6^2)(b_0^2+b_2^2+b_4^2+b_6^2)}=d,
\end{equation} in which case all the solutions are given by
\begin{equation}\label{sol4.2}x=\frac{a'db'}{16(a_0^2+a_2^2+a_4^2+a_6^2)(b_0^2+b_2^2+b_4^2+b_6^2)}+y-\frac{a'aybb'}{16(a_0^2+a_2^2+a_4^2+a_6^2)(b_0^2+b_2^2+b_4^2+b_6^2)},
\forall \  y\in C\ell_{1,2}.\end{equation}
	\end{thm}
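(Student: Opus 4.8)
The plan is to transport the equation $axb=d$ through the ring isomorphism $L$ of Theorem \ref{ringiso}, together with the right‑multiplication representation $R$, into an ordinary linear system over $\br^8$, solve that system with Lemma \ref{general}, and translate the answer back to $C\ell_{1,2}$. The key identity is Proposition \ref{promat}: for $x\in C\ell_{1,2}$ we have $\overrightarrow{axb}=L(a)R(b)\,\overrightarrow{x}$, where $L(a)R(b)=R(b)L(a)$ by Proposition \ref{pro2.2}. Since the vectorization $x\mapsto\overrightarrow{x}$ is a linear bijection $C\ell_{1,2}\to\br^8$, the element $x$ solves $axb=d$ if and only if the vector $\overrightarrow{x}$ solves $L(a)R(b)\,\overrightarrow{x}=\overrightarrow{d}$, and the full solution set in $C\ell_{1,2}$ is just the preimage of the full solution set in $\br^8$.

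Next I would apply Lemma \ref{general} with $A=L(a)R(b)$ and right‑hand side $\overrightarrow{d}$: the system is solvable iff $AA^{+}\overrightarrow{d}=\overrightarrow{d}$, and then the general solution is $\overrightarrow{x}=A^{+}\overrightarrow{d}+(E_8-A^{+}A)\overrightarrow{y}$ with $\overrightarrow{y}\in\br^8$ arbitrary. The crucial input is Proposition \ref{mpprop}(3), which gives $A^{+}=\bigl(L(a)R(b)\bigr)^{+}=L(a^{+})R(b^{+})$. Unwinding $A^{+}\overrightarrow{d}=L(a^{+})R(b^{+})\overrightarrow{d}$ step by step (processing right to left: $R(b^{+})$ multiplies on the right, then $L(a^{+})$ on the left) gives $A^{+}\overrightarrow{d}=\overrightarrow{a^{+}db^{+}}$; similarly $AA^{+}\overrightarrow{d}=\overrightarrow{aa^{+}db^{+}b}$ and $A^{+}A\,\overrightarrow{y}=\overrightarrow{a^{+}aybb^{+}}$. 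Hence the solvability condition becomes $aa^{+}db^{+}b=d$ and, since $\overrightarrow{y}$ ranges over all of $\br^8$ as $y$ ranges over all of $C\ell_{1,2}$, the general solution in $C\ell_{1,2}$ is $x=a^{+}db^{+}+y-a^{+}aybb^{+}$, $y\in C\ell_{1,2}$.

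Finally, the hypothesis $a,b\in Z(C\ell_{1,2})-\{0\}$ means $P(a)=P(b)=0$ with $a,b\neq 0$, so Theorem \ref{thm} supplies the explicit formulas $a^{+}=\dfrac{a'}{4(a_0^2+a_2^2+a_4^2+a_6^2)}$ and $b^{+}=\dfrac{b'}{4(b_0^2+b_2^2+b_4^2+b_6^2)}$. Substituting these into $aa^{+}db^{+}b=d$ gives condition (\ref{cond4.2}), and substituting them into $x=a^{+}db^{+}+y-a^{+}aybb^{+}$ gives the parametrized family (\ref{sol4.2}).

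I do not expect a genuine obstacle here: the argument is essentially a dictionary translation, and every substantive fact it needs — that $L$ is a ring isomorphism onto its image, that $\bigl(L(a)R(b)\bigr)^{+}=L(a^{+})R(b^{+})$, and the closed form for $a^{+}$ in Theorem \ref{thm} — is already established. The only places demanding a little care are (i) confirming that the correspondence between solutions in $C\ell_{1,2}$ and in $\br^8$ is bijective, which is immediate from $x\mapsto\overrightarrow{x}$ being a linear isomorphism; and (ii) keeping the left/right order straight when collapsing $AA^{+}=L(a)R(b)L(a^{+})R(b^{+})$, where one uses $L(a)L(a^{+})=L(aa^{+})$ but $R(b)R(b^{+})=R(b^{+}b)$ because the right representation reverses products (Proposition \ref{pro2.2}(4)); this is precisely what produces the ordering $aa^{+}db^{+}b$ appearing in (\ref{cond4.2}) rather than some other arrangement.
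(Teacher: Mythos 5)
Your proposal is correct and follows essentially the same route as the paper: both pass to the matrix equation $L(a)R(b)\overrightarrow{x}=\overrightarrow{d}$, invoke Lemma \ref{general} together with $\bigl(L(a)R(b)\bigr)^{+}=L(a^{+})R(b^{+})$ from Proposition \ref{mpprop}, translate back to $aa^{+}db^{+}b=d$ and $x=a^{+}db^{+}+y-a^{+}aybb^{+}$, and then substitute the explicit formulas for $a^{+},b^{+}$ from Theorem \ref{thm}. Your extra remark on the order reversal $R(b)R(b^{+})=R(b^{+}b)$ is a correct and welcome clarification of a step the paper leaves implicit.
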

\begin{proof}
	It is obvious that $axb=d$ is equivalent to $L(a)R(b)\overrightarrow{x}=\overrightarrow{d}$.  By Lemma \ref{general} $axb=d$ is  solvable
	if and only if  $$L(a)R(b)\big(L(a)R(b)\big)^+\overrightarrow{d}=\overrightarrow{d}.$$  Returning to Clifford algebra form  by Proposition \ref{mpprop}, we have $aa^+db^+b=d$. That is (\ref{cond4.2}).   By Lemma \ref{general},  the general solution is $$\overrightarrow{x}=\big(L(a)R(b)\big)^+\overrightarrow{d}+\Big(E_8-\big(L(a)R(b)\big)^+L(a)R(b)\Big)\overrightarrow{y},\forall y\in C\ell_{1,2}.$$  Hence the general solution can be expressed as$$x=a^+db^++y-a^+aybb^+,\,\forall y\in C\ell_{1,2}.$$
	That is  (\ref{sol4.2}).
	This concludes the proof.
\end{proof}

Similarly, we have the following corollaries.

\begin{cor}Let
	$a\in Z(C\ell_{1,2})-\{0\}$. Then the equation $ax=d$ is solvable if and only if $$\frac{aa'}{4(a_0^2+a_2^2+a_4^2+a_6^2)}d=d,$$ in which case all the solutions are given by $$x=\frac{a'd}{4(a_0^2+a_2^2+a_4^2+a_6^2)}+y-\frac{a'a}{4(a_0^2+a_2^2+a_4^2+a_6^2)}y ,\,\forall \  y\in
	C\ell_{1,2}.$$
\end{cor}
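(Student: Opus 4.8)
The plan is to mirror the proof of the preceding theorem, specialising the two-sided equation $axb=d$ to the one-sided case. First I would use the ring isomorphism of Theorem \ref{ringiso} together with Proposition \ref{promat} to rewrite $ax=d$ as the real linear system $L(a)\overrightarrow{x}=\overrightarrow{d}$ in $\br^8$. Since $\overrightarrow{(\cdot)}$ is a linear bijection $C\ell_{1,2}\to\br^8$, solving the Clifford equation is literally the same problem as solving this matrix equation, so nothing is lost even though $L$ is not surjective onto $Mat(8,\br)$: every candidate vector $\overrightarrow{x}\in\br^8$ corresponds to a unique $x\in C\ell_{1,2}$.

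Next I would apply the standard matrix fact Lemma \ref{general} with $A=L(a)\in\br^{8\times 8}$, $b=\overrightarrow{d}$, $n=8$: the system $L(a)\overrightarrow{x}=\overrightarrow{d}$ is solvable if and only if $L(a)L(a)^+\overrightarrow{d}=\overrightarrow{d}$, and in that case the general solution is $\overrightarrow{x}=L(a)^+\overrightarrow{d}+\big(E_8-L(a)^+L(a)\big)\overrightarrow{y}$ with $\overrightarrow{y}$ ranging over $\br^8$. Now I would translate back to $C\ell_{1,2}$ using $L(a^+)=L(a)^+$ from \eqref{laplus} and the multiplicativity $L(uv)=L(u)L(v)$ of Proposition \ref{pro2.2}(4): the solvability condition $L(a)L(a)^+\overrightarrow{d}=\overrightarrow{d}$ becomes $L(aa^+)\overrightarrow{d}=\overrightarrow{d}$, i.e. $aa^+d=d$, and the general solution becomes $\overrightarrow{x}=\overrightarrow{a^+d}+\overrightarrow{y}-\overrightarrow{a^+ay}$, i.e. $x=a^+d+y-a^+ay$ as $y$ runs over all of $C\ell_{1,2}$.

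Finally, since $a\in Z(C\ell_{1,2})-\{0\}$ means $a\neq 0$ and $P(a)=0$, Theorem \ref{thm} gives the explicit value $a^+=\dfrac{a'}{4K}$ with $K=a_0^2+a_2^2+a_4^2+a_6^2$. Substituting this into $aa^+d=d$ yields the solvability criterion $\dfrac{aa'}{4K}d=d$, and substituting it into $x=a^+d+y-a^+ay$ yields $x=\dfrac{a'd}{4K}+y-\dfrac{a'a}{4K}y$ for all $y\in C\ell_{1,2}$, which is exactly the stated family. I anticipate no genuine obstacle: the only point meriting a line of care is the observation that transporting the free matrix parameter $\overrightarrow{y}$ to a free Clifford parameter $y$ is legitimate precisely because $\overrightarrow{(\cdot)}$ is a bijection, so the matrix-form general solution and the Clifford-form general solution describe the same solution set. (One could equally bypass Lemma \ref{general} and verify directly from Proposition \ref{mpprop} and Theorem \ref{thm} that the displayed $x$ solves $ax=d$ whenever the criterion holds, but routing through the matrix picture is cleaner.)
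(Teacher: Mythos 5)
Your proposal is correct and follows essentially the same route as the paper: the paper proves the two-sided theorem via $L(a)R(b)\overrightarrow{x}=\overrightarrow{d}$, Lemma \ref{general}, and Proposition \ref{mpprop}, and then states this corollary with the remark ``similarly,'' which is exactly your specialization to $L(a)\overrightarrow{x}=\overrightarrow{d}$ with $a^+=\frac{a'}{4(a_0^2+a_2^2+a_4^2+a_6^2)}$ from Theorem \ref{thm}. No gaps.
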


\begin{cor}Let
	$b\in Z(C\ell_{1,2})-\{0\}$. Then the equation $xb=d$ is solvable if and only if $$\frac{db'b}{4(b_0^2+b_2^2+b_4^2+b_6^2)}=d,$$ in which case all the solutions are given by $$x=\frac{db'}{4(b_0^2+b_2^2+b_4^2+b_6^2)}+y-\frac{ybb'}{4(b_0^2+b_2^2+b_4^2+b_6^2)},\,\forall \  y\in
	C\ell_{1,2}.$$
\end{cor}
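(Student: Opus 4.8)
The plan is to mirror the treatment of $axb=d$, but using only the right-multiplication representation $R$ in place of $L(a)R(b)$. First I would apply Proposition \ref{promat} to rewrite $xb=d$ as the real linear system
\begin{equation*}
R(b)\overrightarrow{x}=\overrightarrow{d},
\end{equation*}
which has the form $A\overrightarrow{x}=\overrightarrow{d}$ with $A=R(b)\in Mat(8,\br)$. Lemma \ref{general} then applies directly: the system is solvable if and only if $R(b)R(b)^{+}\overrightarrow{d}=\overrightarrow{d}$, in which case the general solution is $\overrightarrow{x}=R(b)^{+}\overrightarrow{d}+\big(E_8-R(b)^{+}R(b)\big)\overrightarrow{y}$ for arbitrary $\overrightarrow{y}\in\br^8$.

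The heart of the argument is translating these matrix statements back into $C\ell_{1,2}$. Here I would invoke $R(b^{+})=R(b)^{+}$ (equation (\ref{raplus}), a consequence of Proposition \ref{mpprop}(2)) together with the order-reversing law $R(uv)=R(v)R(u)$ from Proposition \ref{pro2.2}(4). Combining these gives $R(b)R(b)^{+}=R(b)R(b^{+})=R(b^{+}b)$ and $R(b)^{+}R(b)=R(b^{+})R(b)=R(bb^{+})$. Since $R(c)\overrightarrow{z}=\overrightarrow{zc}$, the solvability condition $R(b)R(b)^{+}\overrightarrow{d}=\overrightarrow{d}$ becomes $\overrightarrow{db^{+}b}=\overrightarrow{d}$, i.e. $db^{+}b=d$, while the general solution reads $x=db^{+}+y-ybb^{+}$ for all $y\in C\ell_{1,2}$.

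Finally I would substitute the explicit Moore-Penrose inverse from Theorem \ref{thm}. For $b\in Z(C\ell_{1,2})-\{0\}$ we have $P(b)=0$, so $b^{+}=\frac{b'}{4K}$ with $K=b_0^2+b_2^2+b_4^2+b_6^2$. Plugging this in, the condition $db^{+}b=d$ becomes $\frac{db'b}{4K}=d$, and the two occurrences $db^{+}=\frac{db'}{4K}$ and $ybb^{+}=\frac{ybb'}{4K}$ reproduce exactly the displayed criterion and solution formula.

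The main obstacle is more a matter of care than of genuine difficulty: because $R$ is order-reversing, one must attach the factors $b^{+}$ and $b$ in the correct order when passing from the matrix identities $R(b)R(b)^{+}$ and $R(b)^{+}R(b)$ back to Clifford products, so that the zero-divisor factor $b$ ends up on the \emph{right} of the unknown, as the equation $xb=d$ demands. Once this bookkeeping is respected, every remaining step is a routine substitution of $b^{+}=\frac{b'}{4K}$.
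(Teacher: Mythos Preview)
Your proposal is correct and mirrors exactly the approach the paper intends: the paper simply records this corollary under ``Similarly, we have the following corollaries'' after Theorem~4.1, and your argument is precisely the specialization of that proof obtained by dropping $L(a)$ and working with $R(b)$ alone (equivalently, taking $a=1$). Your careful handling of the order-reversing property $R(uv)=R(v)R(u)$ and the substitution $b^{+}=\frac{b'}{4K}$ are exactly what is needed.
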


We provide some examples as follows.
\begin{exam}
	Let $a=1+\be_1,\, b=\be_6+\be_7,\, d=1+\be_1+\be_6+\be_7$. Then
	$$a^+=\frac{1+\be_1}{4},\,\,b^+=\frac{-\be_6-\be_7}{4}, \,\, aa^+=a^+a=bb^+=b^+b=\frac{1+\be_1}{2},\,\, aa^+db^+b=d.$$
	This case belongs to Theorem 4.1.
	The  solutions of $axb=d$ are given by
	$$x=\frac{1+\be_1-\be_6-\be_7}{4}+y-\frac{(1+\be_1)y(1+\be_1)}{4},\,\forall y\in C\ell_{1,2}.$$
\end{exam} 	
	\begin{exam}
		Let $a=\be_1+\be_2,\,d=\be_1+\be_2+\be_5+\be_6$. Then
	$$a^{+}=\frac{\be_1-\be_2}{4},\,\,a^+a=\frac{1+\be_3}{2}, \,\,aa^+=\frac{1-\be_3}{2}, \, aa^+d=d .$$
			 This case belongs to Corollary 4.1.  The  solutions of $ax=d$ are given by
	$$x=\frac{1+\be_3+\be_4+\be_7}{2}+y-\frac{(1+\be_3)y}{2},\,\forall y\in C\ell_{1,2}.$$
\end{exam} 	
	\begin{exam}
	 	Let $b=\be_6+\be_7,\,d=\be_2-\be_3+\be_4-\be_5$. Then
	 $$b^{+}=\frac{-\be_6-\be_7}{4},\,\,b^+b=bb^+=\frac{1+\be_1}{2}, \,\, db^+b=d .$$  This case belongs to Corollary 4.2.  The  solutions of $xb=d$ are given by
	$$x=\frac{-\be_2+\be_3+\be_4-\be_5}{2}+y-\frac{y(1+\be_1)}{2},\,\forall y\in C\ell_{1,2}.$$
\end{exam}

\section{Similarity}\label{simsec}

\qquad It is well known that two quaternions are similar if and only if they have the same norm and real part. Such relationships were extended to other algebra systems.  For example, Yildiz and Kosal etc. have studied comsimilarity    and  semisimilarity  of split quaternions in \cite{kosal,onder}. In this section, we will introduce the concept of similarity of two elements in $C\ell_{1,2}$ and obtain the necessary and sufficient conditions for them to be similar.

\begin{defi}
	We say that $a,b\in C\ell_{1,2}$  are similar
	if and only if there exists an element $q\in C\ell_{1,2}-Z(C\ell_{1,2})$ such that $qa=bq$.
\end{defi}
Let $$P_1=\{a\in C\ell_{1,2}:P(a)=1\}.$$
  For  $a\in C\ell_{1,2}-Z(C\ell_{1,2})$, we define  $\phi(a)$ to be a map acting on  $C\ell_{1,2}$  as follows: \begin{equation}\phi(a)(x):=axa^{-1},x\in C\ell_{1,2}.\end{equation}
Note that $$\phi(a)(x)=\phi(ta)(x),\,\, \forall  a\in C\ell_{1,2}-Z(C\ell_{1,2}), t \in \br-\{0\}.$$
Since  $P(ta)=t^4P(a)$, we may assume  that $a\in P_1$ when we define $\phi(a)$.

\begin{pro}\label{prosim}
 $Cre(C\ell_{1,2})$ and $Cim(C\ell_{1,2})$ are two invariant subspaces of $\phi(a)$.
\end{pro}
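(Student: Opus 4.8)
The plan is to show that the decomposition $C\ell_{1,2} = Cre(C\ell_{1,2}) \oplus Cim(C\ell_{1,2})$ is preserved by every map $\phi(a)$, i.e.\ that $\phi(a)$ carries $Cre(C\ell_{1,2})$ into itself and $Cim(C\ell_{1,2})$ into itself. Since $Cre(C\ell_{1,2}) = \br + \br\be_7 = Cent(C\ell_{1,2})$ by Proposition \ref{subalgebra}(2), the first claim is immediate: for $c \in Cre(C\ell_{1,2})$ we have $\phi(a)(c) = aca^{-1} = ca a^{-1} = c$, so $\phi(a)$ fixes $Cre(C\ell_{1,2})$ pointwise, and in particular maps it into itself. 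This is the easy half.

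For the second claim, the key observation is that the projection onto the real-part subspace is the operator $Cre(\cdot) = \tfrac12(x + \bar x)$, so it suffices to show $Cre\big(\phi(a)(x)\big) = Cre(x)$ whenever $Cre(x) = 0$; equivalently that $\phi(a)$ commutes with $Cre$. First I would record that $\overline{\phi(a)(x)} = \overline{axa^{-1}} = \overline{a^{-1}}\,\bar x\,\bar a$ using Proposition \ref{properties}(2), and that $\overline{a^{-1}}$ is a real scalar multiple of $a$ because by Proposition \ref{inverse}, $a^{-1} = P(a)^{-1}(N(a) - 2T(a)\be_7)\bar a$, and conjugating, $\overline{a^{-1}} = P(a)^{-1}(N(a) - 2T(a)\be_7)a$ since $N(a)-2T(a)\be_7 \in Cre(C\ell_{1,2})$ is self-conjugate and central. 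Hence $\overline{\phi(a)(x)}$ equals a conjugate-of-the-form-$a\,\bar x\,a^{-1}$ expression up to that central factor. Combining this with Proposition \ref{properties}(8), $Cre(uv) = Cre(vu)$, we get
\begin{equation*}
Cre\big(\phi(a)(x)\big) = Cre(a x a^{-1}) = Cre(a^{-1} a x) = Cre(x),
\end{equation*}
directly from part (8) applied with $u = ax$, $v = a^{-1}$. Thus $Cre \circ \phi(a) = Cre$ on all of $C\ell_{1,2}$, so if $Cre(x) = 0$ then $Cre(\phi(a)(x)) = 0$, i.e.\ $\phi(a)$ maps $Cim(C\ell_{1,2})$ into itself. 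Since $\phi(a)$ is invertible (with inverse $\phi(a^{-1})$) and both subspaces are invariant with complementary dimensions, it restricts to an automorphism of each.

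The only subtlety to handle carefully is making sure $\phi(a)$ is genuinely well defined and linear on $C\ell_{1,2}$: this needs $a \notin Z(C\ell_{1,2})$ so that $P(a) \neq 0$ and $a^{-1}$ exists by Proposition \ref{inverse}, which is exactly the standing hypothesis. I expect the main (very minor) obstacle is simply organizing the conjugation identity cleanly; in fact, the slick route above sidesteps conjugation entirely by invoking Proposition \ref{properties}(8) directly, so the whole argument reduces to the two one-line computations $\phi(a)(c) = c$ for $c$ central and $Cre(axa^{-1}) = Cre(a^{-1}ax) = Cre(x)$. I would present it in that streamlined form.
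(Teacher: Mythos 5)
Your argument is correct, but it takes a genuinely different route from the paper. The paper works through the matrix representation: it writes $a^{-1}=(N(a)-2T(a)\be_7)\bar a$ for $a\in P_1$, forms $A=L(a)R(a^{-1})$, observes that central elements are fixed and that $a\be_i\bar a\in Cim(C\ell_{1,2})$ for $i=1,\dots,6$, and concludes from the resulting block structure $A=\mathrm{diag}(1,S(a),1)$ with $S(a)\in GL(6,\br)$. You instead stay inside the algebra: the identity $Cre(uv)=Cre(vu)$ of Proposition \ref{properties}(8), applied with $u=ax$, $v=a^{-1}$, gives $Cre\circ\phi(a)=Cre$, so $Cim(C\ell_{1,2})=\{x:Cre(x)=0\}$ is invariant, while $Cre(C\ell_{1,2})=\br+\br\be_7=Cent(C\ell_{1,2})$ is even fixed pointwise. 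Your route is shorter and coordinate-free, and it avoids the unproved-in-detail claim $a\be_i\bar a\in Cim(C\ell_{1,2})$ that the paper leaves as ``easy to verify''; what the paper's computation buys in exchange is the explicit matrix $S(a)$, whose columns $\overrightarrow{a\be_ia^{-1}}$ make the action on $Cim(C\ell_{1,2})$ concrete. One small slip in your unused digression: $\overline{a^{-1}}=P(a)^{-1}(N(a)-2T(a)\be_7)a$ is a \emph{central} multiple of $a$, not a real scalar multiple in general; since you discard that branch in favor of Proposition \ref{properties}(8), this does not affect the proof.
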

\begin{proof}
It is easy to verify the following properties: \begin{equation}(s+te_7)Cim(C\ell_{1,2})\subset Cim(C\ell_{1,2}),\forall s,t\in \br.\end{equation}
  \begin{equation} a\be_i\bar{a}\subset Cim(C\ell_{1,2}),\forall \be_i,i=1,\cdots, 6.\end{equation}
If $a\in P_1$ then  $$a^{-1}= (N(a)-2T(a)\be_7)\bar{a}.$$
Let $A=L(a)R((N(a)-2T(a)\be_7)\bar{a})$.  Then we have  \begin{equation}\label{phidets} \overrightarrow{\phi(a)(x)}=L(a)R(a^{-1})\overrightarrow{x}=L(a)R((N(a)-2T(a)\be_7)\bar{a})\overrightarrow{x}=A\overrightarrow{x}.
\end{equation}
Noting that $$a(s+te_7)a^{-1}=(s+te_7),\forall s,t\in \br,$$ we have
$$A=\left(\begin{array}{ccc}
	1 &0&0\\
	0 & S(a)&0\\
	0&0&1
\end{array}\right),$$
 where $S(a)\in GL(6,\br)$  determined by $a$. In fact $$\left(\begin{array}{c}
 	0\\
 	S(a)\\
 	0
 \end{array}\right)=(\overrightarrow{a\be_1a^{-1}},\cdots,\overrightarrow{a\be_6a^{-1}}).$$
This implies that $Cre(C\ell_{1,2})$ and $Cim(C\ell_{1,2})$ are two invariant subspaces of $\phi(a)$.
\end{proof}

Obviously, we have the following proposition.
\begin{pro}\label{lemsim}
	Let $a\in Cent(C\ell_{1,2})$. Then $a,b \in C\ell_{1,2}$ are similar if and only if $a=b.$
\end{pro}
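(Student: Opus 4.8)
The plan is to prove Proposition \ref{lemsim} directly from the definition of similarity together with the description of the center given in Proposition \ref{subalgebra}(2). Recall that $a,b$ are similar means there exists $q\in C\ell_{1,2}-Z(C\ell_{1,2})$ with $qa=bq$. The ``if'' direction is trivial: if $a=b$, take $q=1$, which is not in $Z(C\ell_{1,2})$ since $P(1)=N(1)^2+4T(1)^2=1\neq 0$, and then $qa=a=bq$.

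For the ``only if'' direction, suppose $a\in Cent(C\ell_{1,2})$ and $a,b$ are similar, so $qa=bq$ for some $q\notin Z(C\ell_{1,2})$. First I would observe that $Cent(C\ell_{1,2})\subseteq C\ell_{1,2}-Z(C\ell_{1,2})$: indeed, by Proposition \ref{subalgebra}(2), $a=a_0+a_7\be_7$, so $N(a)=a_0^2-a_7^2$ and $T(a)=a_0a_7$, whence $P(a)=N(a)^2+4T(a)^2=(a_0^2-a_7^2)^2+4a_0^2a_7^2=(a_0^2+a_7^2)^2$, which is $0$ only when $a=0$. So if $a\neq 0$, then $a$ is invertible by Proposition \ref{inverse}. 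Now since $a$ is central, $qa=aq$, and combining with $qa=bq$ gives $aq=bq$, i.e.\ $(a-b)q=0$. Apply the ring isomorphism $L$ of Theorem \ref{ringiso}: $L(a-b)L(q)=0$. The key point is that $q\notin Z(C\ell_{1,2})$ means $P(q)\neq 0$, so $\det(L(q))=P(q)^2\neq 0$ and $L(q)$ is invertible; hence $L(a-b)=0$, and by injectivity of $L$ we get $a=b$. The case $a=0$ is handled the same way: $bq=qa=0$ forces $L(b)L(q)=0$ with $L(q)$ invertible, so $b=0=a$.

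The main (and only mild) obstacle is making sure the various edge cases are covered cleanly: namely that the element $q$ witnessing similarity is invertible precisely because it lies outside $Z(C\ell_{1,2})$, and that a central $a$ is itself automatically outside $Z(C\ell_{1,2})$ (or is zero, which is a degenerate sub-case that still works). Both follow immediately from the formula $P(a_0+a_7\be_7)=(a_0^2+a_7^2)^2$ together with Proposition \ref{inverse}. No computation beyond this is needed; the argument is essentially: central $\Rightarrow$ $a$ commutes with $q$ $\Rightarrow$ $b$ agrees with $a$ after right-multiplication by the invertible $q$ $\Rightarrow$ $a=b$.

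I would write this up in a handful of lines, citing Proposition \ref{subalgebra}, Proposition \ref{inverse}, and Theorem \ref{ringiso} (or equivalently Proposition \ref{uinverse}) for the cancellation step, and noting that the ``if'' direction uses $q=1$.
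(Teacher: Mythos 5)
Your proof is correct, and it is exactly the argument the paper has in mind: the paper states this proposition without proof (labelling it ``obvious''), and your route---$a$ central gives $aq=qa=bq$, then cancel the invertible $q$ (invertible precisely because $q\notin Z(C\ell_{1,2})$ means $P(q)\neq 0$, by Proposition \ref{inverse}), with $q=1$ for the converse---is the natural justification. The side remark that nonzero central elements are themselves invertible is correct but not needed, since only the invertibility of $q$ enters the cancellation.
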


\begin{lem}\label{lem51} If    $Cim(a)$ is invertible and  $$N(Cim(a))=N(Cim(b)),\,\,\,T(Cim(a))=T(Cim(b))$$ then at least
	one of the four elements
	$Cim(a)e_t+e_tCim(b),\, t=0,1,2,3$ is invertible.
\end{lem}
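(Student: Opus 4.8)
Write $\alpha=Cim(a)$ and $\beta=Cim(b)$; by hypothesis $\alpha$ is invertible and $N(\alpha)=N(\beta)$, $T(\alpha)=T(\beta)$. Since $\be_0=1$, $\be_1^2=\be_3^2=1$ and $\be_2^2=-1$, each $\be_t$ ($t=0,1,2,3$) is invertible with $T(\be_t)=0$ and $P(\be_t)=1$. The plan is to peel off $\be_t$ on the left: $Cim(a)\be_t+\be_tCim(b)=\be_t\big(\be_t^{-1}\alpha\be_t+\beta\big)$, so by Proposition~\ref{properties}(7) we get $P\big(Cim(a)\be_t+\be_tCim(b)\big)=P(\be_t)\,P\big(\alpha^{(t)}+\beta\big)=P\big(\alpha^{(t)}+\beta\big)$, where $\alpha^{(t)}:=\be_t^{-1}\alpha\be_t$. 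Hence it suffices to find one $t$ with $\alpha^{(t)}+\beta$ invertible, i.e. with $\big(N(\alpha^{(t)}+\beta),\,T(\alpha^{(t)}+\beta)\big)\neq(0,0)$ (Proposition~\ref{inverse}).

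First I would record two facts. (i) Since $T(\be_t)=0$, Proposition~\ref{properties}(4)--(5) give $N(\be_t^{-1})N(\be_t)=1$ and $T(\be_t^{-1})=0$, whence $N(\alpha^{(t)})=N(\alpha)$ and $T(\alpha^{(t)})=T(\alpha)$ for every $t$. (ii) The averaging identity
\[
\sum_{t=0}^{3}\be_t^{-1}x\be_t=4\,Cre(x)\qquad\text{for all }x\in C\ell_{1,2}.
\]
For (ii) it suffices to check $x=\be_j$: the four conjugations $\sigma_t\colon x\mapsto\be_t^{-1}x\be_t$ form a Klein four-group $G$ (as $\be_1\be_2=\be_3$ and $\be_t^2=\pm1$), and each $\be_j$ is a common eigenvector with eigenvalue $\pm1$, giving a character $\chi_j\colon G\to\{\pm1\}$; for $j\in\{0,7\}$, $\be_j$ is central so $\chi_j\equiv1$ and the sum is $4\be_j$, while for $1\le j\le 6$, $\be_j$ fails to commute with at least one of $\be_1,\be_2,\be_3$, so $\chi_j\not\equiv1$ and $\sum_{g\in G}\chi_j(g)=0$. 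In particular, since $Cre(\alpha)=0$, fact (ii) yields $\sum_{t=0}^{3}\alpha^{(t)}=0$.

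Now I would argue by contradiction: suppose $\alpha^{(t)}+\beta$ is non-invertible for every $t=0,1,2,3$, so that $N(\alpha^{(t)}+\beta)=T(\alpha^{(t)}+\beta)=0$. Let $B_N,B_T$ denote the polar (symmetric bilinear) forms of the quadratic forms $N,T$. Using fact (i) together with $N(\alpha)=N(\beta)$ and $T(\alpha)=T(\beta)$,
\[
N(\alpha^{(t)}+\beta)=2N(\alpha)+B_N(\alpha^{(t)},\beta),\qquad T(\alpha^{(t)}+\beta)=2T(\alpha)+B_T(\alpha^{(t)},\beta).
\]
Summing over $t$ and using $\sum_{t}\alpha^{(t)}=0$ from fact (ii),
\[
0=\sum_{t=0}^{3}N(\alpha^{(t)}+\beta)=8N(\alpha)+B_N\!\Big(\sum_{t=0}^{3}\alpha^{(t)},\beta\Big)=8N(\alpha),
\]
and likewise $0=\sum_{t=0}^{3}T(\alpha^{(t)}+\beta)=8T(\alpha)$. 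Thus $N(Cim(a))=T(Cim(a))=0$, so $P(Cim(a))=N(Cim(a))^2+4T(Cim(a))^2=0$, contradicting the invertibility of $Cim(a)$. Hence at least one of the four elements $Cim(a)\be_t+\be_tCim(b)$, $t=0,1,2,3$, is invertible.

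The only genuine computation is fact (ii), the identity $\sum_{t=0}^{3}\be_t^{-1}x\be_t=4\,Cre(x)$ — equivalently, the vanishing of the four sign-characters attached to $\be_1,\dots,\be_6$, which can also be seen by a direct pass through Table~\ref{tab1}; everything else just uses the norm identities of Proposition~\ref{properties}. I expect this identity, routine but the conceptual key, to be the main obstacle.
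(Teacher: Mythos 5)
Your proof is correct, and it takes a genuinely different route from the paper's. The paper argues in coordinates: assuming all four $w_t=Cim(a)\be_t+\be_tCim(b)$ are non-invertible, it writes out the eight scalar equations $N(w_t)=T(w_t)=0$, combines them to extract the componentwise relations (\ref{eqlem1}) and (\ref{ablemeq2}), and from these forces $N(Cim(a))=T(Cim(a))=0$, contradicting invertibility. You reach the same contradiction coordinate-free: the factorization $w_t=\be_t\bigl(\be_t^{-1}Cim(a)\be_t+Cim(b)\bigr)$ together with multiplicativity of $P$ (Proposition \ref{properties}(7)) and the conjugation-invariance of $N$ and $T$ reduces everything to the four conditions $N(\alpha^{(t)}+\beta)=T(\alpha^{(t)}+\beta)=0$, which you then sum using polarization and the averaging identity $\sum_{t=0}^{3}\be_t^{-1}x\be_t=4\,Cre(x)$ to get $8N(Cim(a))=8T(Cim(a))=0$ at once. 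The steps all check out: $P(\be_t)=1$ and $T(\be_t)=0$ for $t=0,1,2,3$, the invariance of $N,T$ under conjugation follows from Proposition \ref{properties}(4)--(5) exactly as you say, and the averaging identity holds (the character argument is sound, and the needed fact that each of $\be_1,\dots,\be_6$ anticommutes with at least one of $\be_1,\be_2,\be_3$ is quickly confirmed from Table 1, e.g.\ $\be_6$ commutes with $\be_1$ but anticommutes with $\be_2$). What your approach buys is brevity and structure: the only real computation is the averaging identity, and the argument makes transparent why exactly the four conjugators $\be_0,\be_1,\be_2,\be_3$ suffice. What the paper's heavier computation buys is the intermediate identities (\ref{eqlem1}) and (\ref{ablemeq2}) themselves, which are reused verbatim in the proof of Lemma \ref{lem52}; your trivial-character sum does not produce those individual relations (one would need the signed, nontrivial-character combinations of the four conditions to recover them), so the coordinate work would still be needed there.
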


\begin{proof} Suppose that all the four elements
$w_t=Cim(a)e_t+e_tCim(b),\, t=0,1,2,3$ are not invertible. Let
$Cim(a)=a_1e_1+a_2e_2+a_3e_3+a_4e_4+a_5e_5+a_6e_6$ and
$Cim(b)=b_1e_1+b_2e_2+b_3e_3+b_4e_4+b_5e_5+b_6e_6$. Then $$P(w_t)=0,\,\,t=0,1,2,3.$$
By $P(w_0)=0$  we have
\begin{equation}\label{e40}(a_1 + b_1)(a_6 + b_6) + (a_3 + b_3)(a_4 + b_4)=(a_2 + b_2)(a_5 + b_5),\end{equation}
\begin{equation}\label{e41}(a_1 + b_1)^2 +(a_3 + b_3)^2+(a_5 + b_5)^2=(a_2 + b_2)^2+(a_4 + b_4)^2+ (a_6 + b_6)^2.\end{equation}
By $P(w_1)=0$  we have
\begin{equation}\label{e42}(a_1 + b_1)(a_6 + b_6)+(a_3 - b_3)(a_4 - b_4)=(a_2-b_2)(a_5-b_5),\end{equation}
\begin{equation}\label{e43}(a_1 + b_1)^2+(a_3 - b_3)^2+(a_5 - b_5)^2=(a_2 - b_2)^2 +(a_4 - b_4)^2+(a_6 + b_6)^2.\end{equation}
By $P(w_2)=0$  we have
\begin{equation}\label{e44}(a_1 - b_1)(a_6 - b_6) + (a_3 - b_3)(a_4 - b_4)=(a_2 + b_2)(a_5 + b_5),\end{equation}
\begin{equation}\label{e45} (a_1 - b_1)^2+(a_3 - b_3)^2+(a_5 + b_5)^2=(a_2 + b_2)^2+ (a_4 - b_4)^2+(a_6 - b_6)^2.\end{equation}
By $P(w_3)=0$  we have
\begin{equation}\label{e46}(a_1 - b_1)(a_6 - b_6)+(a_3 + b_3)(a_4 + b_4)=(a_2 - b_2)(a_5 - b_5)\end{equation}
\begin{equation}\label{e47}(a_1 - b_1)^2+(a_3 + b_3)^2+(a_5 - b_5)^2 =(a_2 - b_2)^2+(a_4 + b_4)^2 +(a_6 - b_6)^2.\end{equation}
By equations(\ref{e41}),(\ref{e43}),(\ref{e45}) and (\ref{e47}), we can deduce that \begin{equation}\label{eqlem1}a_1b_1=a_6b_6,\,a_2b_2=a_5b_5,\,a_3b_3=a_4b_4.\end{equation}
Thus (\ref{e41}) becomes
$$a_1^2+a_3^2+a_5^2+b_1^2+b_3^2+b_5^2=a_2^2+a_4^2+a_6^2+b_2^2+b_4^2+b_6^2.$$
That is $N(Cim(a))=-N(Cim(b))$.
By $N(Cim(a))=N(Cim(b))$, we have that $$N(Cim(a))=N(Cim(b))=0.$$
By equations(\ref{e40}),(\ref{e42}),(\ref{e44}) and (\ref{e46}), we can deduce that
\begin{equation}\label{ablemeq2}a_1b_6+b_1a_6=0,a_2b_5+b_2a_5=0,a_3b_4+b_3a_4=0.\end{equation}
Thus (\ref{e40}) becomes
$$a_1a_6+b_1b_6+a_3a_4+b_3b_4=a_2a_5+b_2b_5.$$
That is  $-T(Cim(a))=T(Cim(b))$.
By $T(Cim(a))=T(Cim(b))$, we have that $$T(Cim(a))=T(Cim(b))=0.$$
Hence we have $$P(Cim(a))=P(Cim(b))=0,$$
 which contradicts the assumption that $Cim(a)$ is invertible.
 \end{proof}

 If   $Cim(a)=Cim(b)$ then $$Cim(a)x=xCim(b),\,\,\forall x\in \br-\{0\}.$$

%$$N(Cim(a))=N(Cim(b))=0,T(Cim(a))=T(Cim(b))=0$$
\begin{lem}\label{lem52} If   $Cim(a)\neq Cim(b)$ and $Cim(a)\neq 0,\,\,Cim(b)\neq 0$ and $P(Cim(a))=P(Cim(b))=0$ then at least
	one of the four elements
	$Cim(a)e_t+e_tCim(b),\, t=0,1,2,3$ is invertible.
\end{lem}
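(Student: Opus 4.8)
The plan is to mimic the proof of Lemma~\ref{lem51} as closely as possible: argue by contradiction, assume all four elements $w_t=Cim(a)e_t+e_tCim(b)$ for $t=0,1,2,3$ fail to be invertible, which by Proposition~\ref{inverse} means $P(w_t)=0$ for all four $t$. Writing $Cim(a)=\sum_{i=1}^6 a_ie_i$ and $Cim(b)=\sum_{i=1}^6 b_ie_i$, expanding each $P(w_t)=N(w_t)^2+4T(w_t)^2=0$ forces both $N(w_t)=0$ and $T(w_t)=0$; this yields exactly the eight scalar relations (\ref{e40})--(\ref{e47}) already recorded in the previous lemma, since the computation of $w_t$ and its $N$, $T$ values depends only on the coefficient vectors of $Cim(a)$ and $Cim(b)$, not on whether $Cim(a)$ is invertible. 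I would quote those eight equations verbatim.

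From there I would repeat the algebraic deductions of Lemma~\ref{lem51}: combining (\ref{e41}),(\ref{e43}),(\ref{e45}),(\ref{e47}) gives (\ref{eqlem1}), i.e.\ $a_1b_1=a_6b_6$, $a_2b_2=a_5b_5$, $a_3b_3=a_4b_4$, and hence $N(Cim(a))=-N(Cim(b))$; combining (\ref{e40}),(\ref{e42}),(\ref{e44}),(\ref{e46}) gives (\ref{ablemeq2}) and hence $-T(Cim(a))=T(Cim(b))$. The difference from the earlier lemma is the hypothesis: instead of $N(Cim(a))=N(Cim(b))$ and $T(Cim(a))=T(Cim(b))$, I am now given $P(Cim(a))=P(Cim(b))=0$, i.e.\ $N(Cim(a))=T(Cim(a))=0$ and $N(Cim(b))=T(Cim(b))=0$. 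So the relations $N(Cim(a))=-N(Cim(b))$ and $T(Cim(a))=-T(Cim(b))$ are automatically satisfied and give no contradiction by themselves — this is precisely where the proof must diverge from Lemma~\ref{lem51}, and I expect this to be the main obstacle.

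To extract a contradiction I would push further into the relations. From (\ref{eqlem1}) together with $N(Cim(a))=0$, i.e.\ $a_1^2+a_3^2+a_5^2=a_2^2+a_4^2+a_6^2$ (and likewise for $b$), and from (\ref{ablemeq2}) together with $T(Cim(a))=0$, one wants to conclude that the coefficient vectors of $Cim(a)$ and $Cim(b)$ are proportional, hence — since both have $P=0$ — that $Cim(a)$ and $Cim(b)$ are scalar multiples of one another; but a real scalar multiple is exactly the situation $Cim(a)x=xCim(b)$ with $x\in\br$, and one checks separately that under $P=0$ the only way to have $Cim(a)=\lambda Cim(b)$ compatible with all of (\ref{eqlem1}),(\ref{ablemeq2}) forces $\lambda=\pm1$, and the case $Cim(a)=Cim(b)$ is excluded by hypothesis while $Cim(a)=-Cim(b)$ must be ruled out by re-examining one of the sign-sensitive equations among (\ref{e40})--(\ref{e47}) (for instance comparing (\ref{e40}) and (\ref{e44})). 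Concretely, the pairwise products $a_ib_j$ constrained by (\ref{eqlem1}) and (\ref{ablemeq2}) say that for each conjugate pair of indices $(1,6),(2,5),(3,4)$ the two-dimensional vectors $(a_i,a_j)$ and $(b_i,b_j)$ are positively proportional or one of them vanishes; combined with the norm-zero conditions this pins down the direction of each vector and propagates to global proportionality. The remaining bookkeeping is a finite case analysis on which of the $a_i,b_i$ vanish, and I would organize it by the rank of the $2\times 2$ blocks; the genuinely delicate point — and the one I would flag as the crux — is showing that $Cim(a)$ and $Cim(b)$ cannot be \emph{linearly independent} once all eight equations hold, because only then does the hypothesis $Cim(a)\neq Cim(b)$ bite. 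Once proportionality (with ratio $\pm 1$) is forced and the $-1$ case eliminated, we contradict $Cim(a)\neq Cim(b)$, completing the argument.
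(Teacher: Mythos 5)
Your opening is fine and matches the paper: assume all four $w_t=Cim(a)\be_t+\be_tCim(b)$ are non-invertible, expand $P(w_t)=0$ into $N(w_t)=T(w_t)=0$, and recover the same relations (\ref{1eqlem1}) and (\ref{1ablemeq2}) as in Lemma \ref{lem51}. The gap comes at the step you yourself flag as the crux, and the direction you propose there is wrong. The relations $a_1b_1=a_6b_6$, $a_1b_6+a_6b_1=0$ do \emph{not} say that the blocks $(a_1,a_6)$ and $(b_1,b_6)$ are (positively) proportional or one vanishes; they say precisely that $(a_1+\bi a_6)(b_1+\bi b_6)=0$ in $\bc$, i.e.\ that one of the two blocks is identically zero (if $(b_1,b_6)=\lambda(a_1,a_6)$ with $\lambda\neq0$ and the block nonzero, the two relations force $a_1^2=a_6^2$ and $a_1a_6=0$, hence $a_1=a_6=0$). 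The same holds for the pairs $(2,5)$ and $(3,4)$. So under your standing assumption the supports of $Cim(a)$ and $Cim(b)$ are \emph{disjoint} along the conjugate pairs, which makes global proportionality $Cim(a)=\pm Cim(b)$ impossible when both are nonzero; the plan of forcing $\lambda=\pm1$, eliminating $\lambda=-1$, and contradicting $Cim(a)\neq Cim(b)$ therefore cannot be carried out (and indeed the hypothesis $Cim(a)\neq Cim(b)$ is not what produces the contradiction).

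The paper's proof proceeds exactly from this disjointness, recorded as the implications (\ref{ed16})--(\ref{ed34}): since $Cim(a)\neq0$, say $a_1\neq0$, then $b_1=b_6=0$; feeding this into $N(Cim(b))=T(Cim(b))=0$ gives $b_3^2+b_5^2=b_2^2+b_4^2$ and $b_2b_5=b_3b_4$, and since $Cim(b)\neq0$ one gets (say) $b_2\neq0$, hence $a_2=a_5=0$, then $b_3\neq0$, hence $a_3=a_4=0$; finally $N(Cim(a))=T(Cim(a))=0$ collapse to $a_1^2=a_6^2$ and $a_1a_6=0$, contradicting $a_1\neq0$. In short, the contradiction is extracted from the null conditions on $Cim(a)$ and $Cim(b)$ separately, combined with the disjoint-support structure — not from any proportionality statement. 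Your sketch would need to be rewritten along these lines; as it stands, the pivotal "proportionality, hence $\pm1$" step is false.
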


\begin{proof} Suppose that all the four elements
	$w_t=Cim(a)e_t+e_tCim(b),\, t=0,1,2,3$ are not invertible. Let
	$Cim(a)=a_1e_1+a_2e_2+a_3e_3+a_4e_4+a_5e_5+a_6e_6$ and
	$Cim(b)=b_1e_1+b_2e_2+b_3e_3+b_4e_4+b_5e_5+b_6e_6$.
	Then as in Lemma \ref{lem51}, we have
\begin{equation}\label{1eqlem1}a_1b_1=a_6b_6,\,a_2b_2=a_5b_5,\,a_3b_3=a_4b_4.\end{equation}
and
\begin{equation}\label{1ablemeq2}a_1b_6+b_1a_6=0,\,a_2b_5+b_2a_5=0,\,a_3b_4+b_3a_4=0.\end{equation}
By $a_1b_1=a_6b_6$ and $a_1b_6+b_1a_6=0$, we have
$$a_1(b_1^2+b_6^2)=0,\,a_6(b_1^2+b_6^2)=0$$ and
$$b_1(a_1^2+a_6^2)=0,\,b_6(a_1^2+a_6^2)=0.$$
Thus we have the following inference:
\begin{equation}\label{ed16}a_1^2+a_6^2\neq 0\Rightarrow b_1=b_6=0;\quad b_1^2+b_6^2\neq 0\Rightarrow a_1=a_6=0.
\end{equation}
Similarly we have
\begin{equation}\label{ed25}a_2^2+a_5^2\neq 0\Rightarrow b_2=b_5=0;\quad b_2^2+b_5^2\neq 0\Rightarrow a_2=a_5=0;
\end{equation}
 \begin{equation}\label{ed34}a_3^2+a_4^2\neq 0\Rightarrow b_3=b_4=0;\quad b_3^2+b_4^2\neq 0\Rightarrow a_3=a_4=0.
 \end{equation}
 Since $Cim(a)\neq 0$, without  loss of generality, we suppose that $a_1\neq 0$. Then we have $b_1=b_6=0$.  Note that
 $N(Cim(b))=0,\,\,T(Cim(b))=0$. Hence we have
 $$b_3^2+b_5^2=b_2^2+b_4^2,\,\, b_2b_5=b_3b_4.$$
 From the above equations, we have $$(b_5^2-b_4^2)(b_3^2+b_5^2)=0,\,\,(b_5^2-b_4^2)(b_2^2+b_4^2)=0.$$
   Since $Cim(b)\neq 0$, we may assume that $b_2\neq 0$.  It follows from (\ref{ed25}) that $a_2=a_5=0$ and  $$b_5^2=b_4^2,\,\, b_3^2=b_2^2.$$
  So we deduce that $b_3\neq 0$, which implies that $a_3=a_4=0$ by (\ref{ed34}).
   Note that
 $N(Cim(a))=a_2^2+a_4^2+a_6^2-a_1^2-a_3^2-a_5^2=0,\,\,T(Cim(a))=a_2a_5-a_1a_6-a_3a_4=0$. That is  $$a_1^2=a_6^2, \,\,a_1a_6=0.$$
 The above two equations is a contradiction, which concludes the proof.
\end{proof}

\begin{thm}\label{thmsim}
	Two Clifford numbers $a,b \in C\ell_{1,2}-Cent(C\ell_{1,2})$ are similar if and only if
	$$Cre (a)=Cre (b), \,N(a)=N(b) \, \mbox{and}\,\,\,T(a)=T(b).$$
\end{thm}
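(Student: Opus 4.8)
The plan is to prove both directions by reducing the similarity relation $qa=bq$ to a statement purely about the real and imaginary parts, and then to exploit the structural lemmas already established. For the necessity direction, suppose $qa=bq$ with $q\in C\ell_{1,2}-Z(C\ell_{1,2})$, so that $q$ is invertible by Proposition~\ref{inverse} and $a=q^{-1}bq$. First I would show $Cre(a)=Cre(b)$: since $Cre(b)\in Cent(C\ell_{1,2})=\br+\br\be_7$ by Proposition~\ref{subalgebra}(2), we get $q^{-1}Cre(b)q=Cre(b)$, and it remains to check that conjugation by $q$ preserves the splitting $C\ell_{1,2}=Cre(C\ell_{1,2})\oplus Cim(C\ell_{1,2})$ — but this is exactly Proposition~\ref{prosim}. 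Hence $Cre(a)=Cre(q^{-1}bq)=q^{-1}Cre(b)q+q^{-1}Cim(b)q$ projected onto the centre equals $Cre(b)$. Next, $N$ and $T$ are preserved: from $a=q^{-1}bq$ and the multiplicativity in Proposition~\ref{properties}(4),(5),(7) applied to $N(a)=N(q^{-1})N(b)N(q)-(\text{cross terms})$, together with $N(q^{-1})N(q)$ and $T$ relations coming from $P(q^{-1})=1/P(q)$, one deduces $N(a)=N(b)$ and $T(a)=T(b)$. (Cleaner: $a\bar a = N(a)+2T(a)\be_7$ and $a\bar a = q^{-1}bq\,\overline{q^{-1}bq}=q^{-1}b\bar b\,q$ after checking $\overline{q^{-1}bq}=\bar q\,\bar b\,\bar q^{-1}$ and that the central element $b\bar b$ commutes through, giving $a\bar a = b\bar b$ directly.)

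For the sufficiency direction, assume $Cre(a)=Cre(b)$, $N(a)=N(b)$, $T(a)=T(b)$. Writing $a=Cre(a)+Cim(a)$, $b=Cre(b)+Cim(b)$ with $Cre(a)=Cre(b)$ central, the equation $qa=bq$ reduces to $q\,Cim(a)=Cim(b)\,q$, so I only need to intertwine the imaginary parts. Note $N(Cim(a))=N(a)$ and $T(Cim(a))=T(a)$ after subtracting the central part (this needs a short verification using the definitions of $N,T$ and the fact that $Cre(a)=a_0+a_7\be_7$ contributes $a_0^2-a_7^2$ to $N$ and $a_0a_7$ to $T$ — so actually $N(Cim(a))=N(a)-a_0^2+a_7^2$; I would instead carry $a_0,a_7$ along or absorb them, since $Cre(a)=Cre(b)$ makes the correction terms identical on both sides). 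The standard trick is to try $q=Cim(a)\be_t+\be_t\,Cim(b)$ for a suitable $t\in\{0,1,2,3\}$ (with $\be_0=1$): a direct computation gives $q\,Cim(a)=Cim(b)\,q$ because $Cim(a)\be_t Cim(a)$ versus $Cim(b)\be_t Cim(a)$ match once $N(Cim(a))=N(Cim(b))$ and $T(Cim(a))=T(Cim(b))$ are used (the key identity is $Cim(a)\be_t Cim(a)+\be_t Cim(b)Cim(a)=Cim(b)Cim(a)\be_t+Cim(b)\be_t Cim(b)$, which collapses using $Cim(a)\overline{Cim(a)}=N(Cim(a))+2T(Cim(a))\be_7$). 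It then suffices to know that for at least one such $t$ this $q$ is invertible, i.e.\ $q\notin Z(C\ell_{1,2})$ — and this is precisely what Lemmas~\ref{lem51} and~\ref{lem52} provide, splitting into the cases $Cim(a)$ invertible, $Cim(a)=Cim(b)$ (handled by the remark before Lemma~\ref{lem52}, take $q\in\br-\{0\}$ or $q=1$), and $Cim(a)\ne Cim(b)$ with $P(Cim(a))=P(Cim(b))=0$.

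The one remaining gap is the degenerate sub-case where $Cim(a)$ is \emph{not} invertible but $Cim(a)\ne Cim(b)$ and one of $P(Cim(a)),P(Cim(b))$ is nonzero while the other is zero — but the hypotheses $N(a)=N(b)$, $T(a)=T(b)$ force $N(Cim(a))=N(Cim(b))$ and $T(Cim(a))=T(Cim(b))$ (after the $Cre$ correction cancels), hence $P(Cim(a))=P(Cim(b))$, so this case does not arise, and the trichotomy covered by Lemmas~\ref{lem51}, \ref{lem52} and the intervening remark is exhaustive. I also need $a,b\notin Cent(C\ell_{1,2})$ for Lemma~\ref{lem51}'s invertibility conclusion to be non-vacuous (if $Cim(a)=0$ then $a$ is central, excluded by hypothesis). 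The main obstacle I anticipate is organizing this case analysis cleanly and verifying the intertwining identity $q\,Cim(a)=Cim(b)\,q$ for $q=Cim(a)\be_t+\be_t\,Cim(b)$ — the computation is routine in principle but relies on keeping straight the non-commutativity of the $\be_t$'s against $Cim(a),Cim(b)$, so I would present it via the conjugate identity $x\bar x=N(x)+2T(x)\be_7$ rather than coordinatewise, and handle the bookkeeping of the central correction terms $a_0,a_7$ once at the start by reducing to the case $Cre(a)=Cre(b)=0$.
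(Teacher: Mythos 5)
Your proposal follows essentially the same route as the paper: necessity via Proposition~\ref{prosim} for the real parts and via $a\bar a=b\bar b$ (central, cancel the invertible factor) for $N$ and $T$; sufficiency by reducing to an intertwining relation between $Cim(a)$ and $Cim(b)$, using that $Cim(a)^2=Cim(b)^2$ is central to get a two-parameter family of intertwiners, and then invoking Lemmas~\ref{lem51} and~\ref{lem52} (plus the trivial case $Cim(a)=Cim(b)$) to find an invertible one. Your explicit check that the hypotheses force $P(Cim(a))=P(Cim(b))$, so the trichotomy is exhaustive, is a point the paper leaves implicit, and is welcome.

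One concrete slip: with your convention $q\,Cim(a)=Cim(b)\,q$, the element $q=Cim(a)\be_t+\be_t\,Cim(b)$ is the wrong candidate, and your ``key identity'' $Cim(a)\be_t Cim(a)+\be_t Cim(b)Cim(a)=Cim(b)Cim(a)\be_t+Cim(b)\be_t Cim(b)$ is false in general (e.g.\ $Cim(a)=\be_1$, $Cim(b)=\be_3$, $t=1$ gives $\be_1-\be_3$ versus $\be_3-\be_1$). What $x=Cim(a)p+p\,Cim(b)$ actually satisfies is $Cim(a)\,x=x\,Cim(b)$ (the paper's convention $ax=xb$), since then both sides equal $Cim(a)\,p\,Cim(b)$ plus the central element $Cim(a)^2p=p\,Cim(b)^2$. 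This is harmless: either verify $ax=xb$ and pass to $q=x^{-1}$ (so $qa=bq$), or take $q=Cim(b)\be_t+\be_t\,Cim(a)$ for your convention; Lemmas~\ref{lem51}--\ref{lem52} apply symmetrically in $a,b$, so the invertibility conclusion is unaffected. With that orientation fixed, your argument matches the paper's proof.
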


%\begin{thm}\label{thmsim}
%	If two Clifford numbers $a,b \in C\ell_{1,2}$ are similar then
%	$$Cre (a)=Cre (b), N(a)=N(b) \, \mbox{and}\,\,\,T(a)=T(b).$$
%\end{thm}

\begin{proof}
	If  $a,b \in C\ell_{1,2}$ are similar then there exists an element $q\in C\ell_{1,2}-Z(C\ell_{1,2})$ such that $qa=bq$. Since $b=\phi(q)(a)$, by Proposition \ref{prosim}, we have $Cre (a)=Cre (b)$.
	Also we have $$q\bar{q}a\bar{a}=qa\overline{qa}=bq\overline{bq}=q\bar{q}b\bar{b}.$$
	Thus $$q\bar{q}(a\bar{a}-b\bar{b})=0.$$ Since $P(q\bar{q})=P(q)^2\neq 0$, $q\bar{q}$ is invertible. Hence we have $a\bar{a}=b\bar{b}.$  Note that  $$a\bar{a}=N(a)+2T(a)e_7,\,\,b\bar{b}=N(b)+2T(b)e_7.$$  This implies that  $N(a)=N(b)$ and $T(a)=T(b)$. This proves the necessity.
	
	To prove the sufficiency, we need find an element $x\in C\ell_{1,2}-Z(C\ell_{1,2})$ such that $$(Cre(a)+Cim(a))x=ax=xb=x(Cre(b)+Cim(b))$$ under the conditions $Cre (a)=Cre (b), \, N(a)=N(b)$ and $T(a)=T(b).$
	This is equivalent to  \begin{equation}\label{simeq}Cim(a)x=xCim(b), \mbox{ for some } x\in C\ell_{1,2}-Z(C\ell_{1,2}).\end{equation}
	Noting that $$Cim(a)Cim(a)=Cre(a)^2-N(a)-2T(a)e_7,$$
	we have $$Cim(a)Cim(a)=Cim(b)Cim(b)\in Cent(C\ell_{1,2}).$$
	Hence $$x=Cim(a)p+pCim(b),\,\,\forall p\in C\ell_{1,2}$$
	are solutions of equation (\ref{simeq}).
	Especially, $$x_i=Cim(a)\be_i+\be_iCim(b),\,\,i=0,\cdots,4$$
are solutions of equation (\ref{simeq}).
Applying Lemmas \ref{lem51} and \ref{lem52} concludes the proof.

\end{proof}

{\bf Acknowledgements.}\quad This work is supported by Natural Science Foundation of China (11871379), Innovation Project of Department of Education of Guangdong Province (2018KTSCX231) and Key project of  Natural Science Foundation  of Guangdong Province Universities (2019KZDXM025).

\noindent Wensheng Cao,  Ronglan Zheng, Huihui Cao\\
School of Mathematics and Computational Science,\\
Wuyi University\\
Jiangmen, Guangdong,  P.R. China\\
e-mail: {\tt wenscao@aliyun.com}

\end{document}